\numberwithin{equation}{section}
\theoremstyle{plain}
\newtheorem{theorem}{Theorem}[section]
\newtheorem{lemma}[theorem]{Lemma}
\newtheorem{proposition}[theorem]{Proposition}
\newtheorem{corollary}[theorem]{Corollary}
\theoremstyle{definition}
\newtheorem{definition}[theorem]{Definition}
\newcommand{\yy}{\rightarrow}
\newcommand{\ladj}[1]{{#1}_{\ast}}
\newcommand{\radj}[1]{{#1}^{\ast}}
\newcommand{\ident}{\mathrm{Id}}
\newcommand{\atom}{\mathcal{A}}
\newcommand{\power}{\mathcal{P}}
\newcommand{\nsystem}{\mathcal{V}}
\newcommand{\iand}{\bigwedge}
\newcommand{\ior}{\bigvee}
\newcommand{\da}{\mbox{$\downarrow$}}
\newcommand{\ua}{\mbox{$\uparrow$}}
\newcommand{\drel}[1]{\mbox{$\downarrow$}_{#1}}
\newcommand{\urel}[1]{\mbox{$\uparrow$}_{#1}}
\newcommand{\thn}{\ \Rightarrow\ }
\newcommand{\eq}{\ \Leftrightarrow\ }
\newcommand{\rel}[1]{<_{#1}}
\newcommand{\kfr}{\mathbf{KFr}}
\newcommand{\nfr}{\mathbf{NFr}}
\newcommand{\mkf}{\mathbf{MRKF}}
\newcommand{\caba}{\mathbf{CABA}}
\newcommand{\cama}{\mathbf{CAMA}}
\newcommand{\mdl}{\Diamond}
\title[Duality for $\kappa$-additive complete atomic modal algebras]{Duality for $\kappa$-additive 
complete atomic  modal algebras}
\author[Y. Tanaka]{Yoshihito Tanaka}
\address{Department of Economics\\
Kyushu Sangyo University\\Fukuoka 813-0015\\JAPAN}
\email{ytanaka@ip.kyusan-u.ac.jp}
\subjclass{06E25, 03G25, 03B45}
\keywords{Modal algebras, Neighborhood frames, Multi-relational Kripke frames}
\begin{document}

\maketitle

\begin{abstract}
In this paper, we give a duality theorem between the 
category of 
$\kappa$-additive complete atomic modal algebras and 
the category of $\kappa$-downward directed multi-relational Kripke frames, 
for any cardinal number $\kappa$. 
Multi-relational Kripke frames are not Kripke frames for multi-modal logic, 
but frames for monomodal logics in which the modal operator 
$\mdl$ does not distribute over (possibly infinite) disjunction, in general. 
We first define homomorphisms of multi-relational Kripke frames, and then
show the equivalence between
the category of $\kappa$-downward directed multi-relational Kripke frames 
and 
the category $\kappa$-complete neighborhood frames, 
from which the duality theorem follows. 
We also present another direct proof 
of this duality based on the technique given by Minari. 
\end{abstract}

\section{Introduction}

It is proved by Thomason \cite{thm75} that 
the category of all completely additive complete atomic modal algebras
is dually equivalent to the category of all Kripke frames, 
where a modal algebra is said to be completely additive, if the modal operator $\mdl$ 
distributes over the joins of every subsets of the algebra. 
However, there are some modal logics which cannot be characterized by 
a class of completely additive modal algebras. 
For example, 
if we see the existential and universal quantifiers as 
infinite joins and meets, respectively, 
the Barcan formula 
$\forall x\Box\phi\supset\Box\forall x\phi$ corresponds to the complete additivity, 
but there exist predicate modal logics in which it is not derivable. 
Moreover, there exists a propositional normal modal logic which is incomplete 
with respect to any class of completely additive complete modal algebras 
\cite{hld-ltk19}.

Subsequently, Do\v{s}en \cite{dsn89} 
gives broad kinds of duality theorems for 
categories of modal algebras and 
neighborhood frames, 
including 
duality between 
the category of complete atomic modal algebras and 
the category of 
$\omega$-complete neighborhood frames 
(which are called full filter frames in \cite{dsn89})
and 
that between
the category of completely additive complete atomic modal algebras 
and 
the category of 
complete neighborhood frames
(which are called full hyperfilter frames in \cite{dsn89}), 
as well as 
equivalence between 
the category of 
complete neighborhood frames and 
the category of Kripke frames. 
However, it should be remarked that 
the category of neighborhood frames is not a generalization of 
the category of Kripke frames, in the following sense: 
For any Kripke frame $F=\langle W,R\rangle$, we can define the "underlying" 
neighborhood frame $U(F)=\langle W,\nsystem_{F} \rangle$, where,  
$$
\nsystem_{F}(x)=\{\{y\mid (x,y)\in R\}\},  
$$
for any $x\in W$.  
However, as we will see in Theorem~\ref{nfr-mkf}, $U$ does not define 
the forgetful functor.

In this paper, we give another duality theorem for the category of 
complete atomic modal algebras between the 
category of multi-relational Kripke frames. 
Multi-relational Kripke frames are not Kripke frames for multi-modal logic, 
but frames for monomodal logics in which the modal operator 
$\mdl$ does not distribute over (possibly infinite) disjunction, in general. 
For example, in deontic logic (see, e.g., \cite{gbl00,cld13}), 
\begin{equation}\label{mdldistland}
(\Box p\land\Box q)\supset\Box(p\land q)
\end{equation}
should not be derived,
as the formula $(\Box\phi\land\Box\neg\phi)\supset\Box\psi$, 
which means that "if there is any conflict of obligation, then everything is  obligatory"
(\cite{gbl00}, p.114)  
can be deduced from it,  
and in the least infinitary modal logic, it is proved that the 
countable extension of 
(\ref{mdldistland}) is not derivable \cite{tnkcut,mnr16}. 
Consequently, 
these logics are Kripke incomplete, but it is proved that 
deontic logic $\mathrm{P}$
is complete with respect to the class of
serial multi-relational Kripke frames  \cite{gbl00}, and 
the least
infinitary modal logic is complete 
with respect to the class of $\omega$-downward directed multi-relational 
Kripke frames \cite{mnr16}. 
In this paper, we first define homomorphisms of multi-relational Kripke frames 
so that 
the category of multi-relational Kripke frames is going to be
a generalization of the category of Kripke frames. 
Then we show that the category of $\kappa$-downward directed 
multi-relational Kripke frames are equivalent to the category 
of $\kappa$-complete neighborhood frames for every cardinal number $\kappa$, 
which is a generalization of Do\v{s}en's equivalence theorem 
between the category of Kripke frames and the category of 
complete neighborhood frames. 
From this equivalence, duality between the category of $\kappa$-additive complete atomic 
modal algebras and the category of $\kappa$-downward directed 
multi-relational Kripke frames follows. 
In addition, we give another proof for this duality for any regular cardinal $\kappa$. 
The basic technique of this proof is given by Minari \cite{mnr16}. 
He proved completeness theorem for the least infinitary modal logic with 
respect to $\omega$-downward directed multi-relational Kripke frames 
by constructing a multi-relational Kripke frame such that 
each binary relation is given in the same way as the canonical frame of 
a finite fragment of the Lindenbaum algebra. 
We show that Minari's technique works also for homomorphisms 
and can be extended for any regular cardinal $\kappa$.

\section{Preliminaries}

In this section, we fix notation and recall definitions and 
basic results. 
For the details,   
see, e.g., \cite{blc-rjk-vnm01,gvn-hlm09}.  

Let $W$ be a non-empty set and $R$ a binary relation on $W$. 
For any $w_{1}$ and $w_{2}$ in $W$, we write 
$w_{1}\rel{R}w_{2}$ if $(w_{1},w_{2})\in R$. 
For any $X\subseteq W$, 
$\urel{R}X$ and $\drel{R}X$ denote the subsets of $W$ defined by 
$$
\urel{R}X=\{w\in W\mid \exists x\in X(x\rel{R}w)\},\ \  
\drel{R}X=\{w\in W\mid \exists x\in X(w\rel{R}x)\},
$$
respectively. If $X$ is a singleton $\{w\}$, we write 
$\urel{R}w$ and $\drel{R}w$ for 
$\urel{R}X$ and $\drel{R}X$, respectively.  
If $R$ is a partial order $\leq$, we write $\ua$ and $\da$ for 
$\urel{\leq}$ and $\drel{\leq}$, respectively.

Let $f:A\yy B$ be a mapping from a set $A$ to a set $B$. 
For any set $X\subseteq A$ and $Y\subseteq B$, 
$f\left[X\right]$ and $f^{-1}\left[Y\right]$ denote the sets
$$
f\left[X\right]
=
\{f(x)\mid x\in X\},\ \ 
f^{-1}\left[Y\right]
=
\{x\in X\mid f(x)\in Y\},  
$$
respectively.

\begin{definition}\rm
A Boolean algebra $A$ is said to be {\em complete} if for any 
$X\subseteq A$,  
$\ior X$ and $\iand X$ exist in $A$. 
Let $A$ and $B$ be complete Boolean algebras. A mapping $f:A\yy B$ is called 
a {\em homomorphism of complete Boolean algebras} if $f$ is a homomorphism 
of Boolean algebras which satisfies
$$
f\left(\ior X\right)=\ior f\left[X\right],\ 
f\left(\iand X\right)=\iand f\left[X\right]
$$
for any $X\subseteq A$. 
\end{definition}

\begin{definition}\rm 
For any homomorphism $f:A\yy B$ of complete Boolean algebras, 
$\radj{f}$ and $\ladj{f}$ denote mappings from $B$ to $A$ which are defined by 
$$
\radj{f}(b)=\ior f^{-1}\left[\da b\right],\ \  
\ladj{f}(b)=\iand f^{-1}\left[\ua b\right],
$$
for any $b\in B$, respectively.  
\end{definition}

\begin{proposition}
Let $f:A\yy B$ be a homomorphism of complete Boolean algebras.  
For any $a\in A$ and $b\in B$, 
\begin{equation}\label{adjoint}
f(a)\leq b \eq a\leq\radj{f}(b),\ \ 
b\leq f(a)\eq \ladj{f}(b)\leq a.
\end{equation}
That is, $\radj{f}$ and $\ladj{f}$ are right and left adjoints of $f$, respectively. 
\end{proposition}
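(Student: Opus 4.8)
The plan is to prove the two adjunction biconditionals in (\ref{adjoint}) by unfolding the definitions of $\radj{f}$ and $\ladj{f}$ and exploiting the fact that $f$ preserves arbitrary joins and meets. Since the two statements are order-dual to each other (replacing $f$ by its action on the dual lattices, or simply by symmetry of the argument), I expect to give the argument in full for the first biconditional $f(a)\leq b \eq a\leq\radj{f}(b)$ and then obtain the second by the dual reasoning, spelling out only the points where complementation or direction of inequalities matters.

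For the right-adjoint claim, recall that $\radj{f}(b)=\ior f^{-1}[\da b]$. I would first establish the easy direction. Suppose $a\leq\radj{f}(b)$. Applying $f$ and using that $f$ preserves joins, $f(a)\leq f(\radj{f}(b))=f(\ior f^{-1}[\da b])=\ior f[f^{-1}[\da b]]$. Now every element of $f[f^{-1}[\da b]]$ lies in $\da b$, i.e.\ is $\leq b$, so the join of this set is $\leq b$; hence $f(a)\leq b$. Conversely, suppose $f(a)\leq b$. Then $f(a)\in\da b$, so $a\in f^{-1}[\da b]$, and therefore $a\leq\ior f^{-1}[\da b]=\radj{f}(b)$. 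This second half is immediate from the definition of the join, and the monotonicity of $f$ together with join-preservation drives the first half.

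For the left-adjoint claim, $\ladj{f}(b)=\iand f^{-1}[\ua b]$, and the argument is strictly dual. If $\ladj{f}(b)\leq a$ one does not directly recover $b\leq f(a)$ by applying $f$; instead one argues on the other side: assume $b\leq f(a)$, so $f(a)\in\ua b$, whence $a\in f^{-1}[\ua b]$ and thus $\ladj{f}(b)=\iand f^{-1}[\ua b]\leq a$. For the reverse direction, assume $\ladj{f}(b)\leq a$; applying $f$ and using meet-preservation, $f(a)\geq f(\ladj{f}(b))=\iand f[f^{-1}[\ua b]]$, and every element of $f[f^{-1}[\ua b]]$ is $\geq b$, so the meet is $\geq b$, giving $b\leq f(a)$. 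The remark that $\radj{f}$ and $\ladj{f}$ are right and left adjoints then follows since the two biconditionals are exactly the adjunction laws.

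The one point that requires genuine care -- the main obstacle, such as it is -- is the step $f(\ior f^{-1}[\da b])=\ior f[f^{-1}[\da b]]$ and its meet-analogue: these are precisely the places where completeness of the Boolean algebras and the assumption that $f$ is a \emph{homomorphism of complete Boolean algebras} (rather than merely a Boolean homomorphism) are essential, since the sets $f^{-1}[\da b]$ and $f^{-1}[\ua b]$ need not be finite. Everything else reduces to the defining universal property of $\ior$ and $\iand$ as least upper bound and greatest lower bound together with the monotonicity of $f$; no clever construction is needed.
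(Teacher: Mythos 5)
Your proof is correct. The paper states this proposition without proof, treating it as a standard fact about adjoints between complete lattices, and your argument---the easy directions from the defining universal properties of $\ior$ and $\iand$, the other directions by applying join/meet-preservation to get $\ior f\left[f^{-1}\left[\da b\right]\right]\leq b$ and $b\leq \iand f\left[f^{-1}\left[\ua b\right]\right]=f(\ladj{f}(b))$---is exactly the standard one, with the role of completeness correctly isolated. (One minor remark: your aside that one ``does not directly recover'' $b\leq f(a)$ from $\ladj{f}(b)\leq a$ by applying $f$ is belied by your own next step, which is precisely that direct argument via $b\leq f(\ladj{f}(b))\leq f(a)$.)
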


It follows from (\ref{adjoint}) that $\radj{f}$ and $\ladj{f}$ are 
order preserving mappings and 
\begin{equation}\label{monotone}
f\circ\radj{f}, \ \ladj{f}\circ f\leq\ident_{B},
\ \ \ 
\ident_{A}
\leq\radj{f}\circ f,\ f\circ\ladj{f}. 
\end{equation}

\begin{definition}\rm
Let $A$ be a Boolean algebra. 
A non-zero element 
$a\in A$ is called an  {\em atom}
if $0< x\leq a$ implies $x=a$. 
The set of all atoms of  $A$ is denoted by 
$\atom(A)$. 
A Boolean algebra $A$ is said to be {\em atomic} 
if every non-zero element $x\in A$ satisfies
$$
x=\ior_{a\in\atom(A),\ a\leq x}a. 
$$
We write $\caba$ for the category whose objects are all complete and 
atomic Boolean algebras 
and arrows are all homomorphisms of complete Boolean algebras. 
\end{definition}

\begin{proposition}\label{atom-property}
Let $A$ be a Boolean algebra and $0\not=a\in A$. 
Then the following conditions are equivalent:
\begin{enumerate}
\item \label{con:atom}
$a$ is an atom. 
\item \label{con:compjoinirr}
For any $X\subseteq A$, if $\ior X\in A$ and $a\leq \ior X$ then 
$a\leq x$ for some $x\in X$. 
\item \label{con:joinirr}
For any $x$ and $y$ in $A$, if $a\leq x\lor y$ then 
$a\leq x$ or $a\leq y$. 
\item \label{con:complete}
For any $x\in A$, 
$a\leq x$ or $a\leq -x$. 
\end{enumerate}
\end{proposition}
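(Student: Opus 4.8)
The plan is to establish all four equivalences through the single cyclic chain $(\ref{con:atom})\Rightarrow(\ref{con:compjoinirr})\Rightarrow(\ref{con:joinirr})\Rightarrow(\ref{con:complete})\Rightarrow(\ref{con:atom})$. Of these four links, the last three are purely formal manipulations valid in any Boolean algebra, whereas the first, from atomicity to complete join-irreducibility, carries the only genuine content, since it is the step that must invoke the defining least-upper-bound property of $\ior X$.

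For the three formal links I would argue as follows. For $(\ref{con:compjoinirr})\Rightarrow(\ref{con:joinirr})$, given $x,y\in A$ I simply specialize the hypothesis to the two-element set $X=\{x,y\}$, whose join $x\lor y$ always exists, so that $a\leq x\lor y$ yields $a\leq x$ or $a\leq y$. For $(\ref{con:joinirr})\Rightarrow(\ref{con:complete})$, I apply $(\ref{con:joinirr})$ to the identity $x\lor(-x)=1$, noting $a\leq 1$, to obtain $a\leq x$ or $a\leq -x$. For $(\ref{con:complete})\Rightarrow(\ref{con:atom})$, I take any $x$ with $0<x\leq a$ and invoke $(\ref{con:complete})$: the alternative $a\leq -x$ is impossible, since together with $x\leq a$ it would force $x\leq -x$ and hence $x=0$, so $a\leq x$ must hold, and with $x\leq a$ this gives $x=a$.

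The main step is $(\ref{con:atom})\Rightarrow(\ref{con:compjoinirr})$. Suppose $a$ is an atom, let $X\subseteq A$ be such that $\ior X$ exists and $a\leq\ior X$, and argue by contradiction assuming $a\not\leq x$ for every $x\in X$. For each such $x$ the element $a\land x$ lies below the atom $a$, so atomicity forces $a\land x\in\{0,a\}$; the value $a$ would give $a\leq x$, so in fact $a\land x=0$, equivalently $x\leq -a$, for every $x\in X$. Thus $-a$ is an upper bound of $X$, and since $\ior X$ is the least such bound we obtain $\ior X\leq -a$. Combined with $a\leq\ior X$ this yields $a\leq -a$, whence $a=a\land(-a)=0$, contradicting that $a$ is an atom.

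I expect this last implication to be the only place requiring care, precisely because it is where completeness of the join, rather than a finite Boolean identity, is used: the passage from ``$a\not\leq x$ for all $x$'' to ``$-a$ bounds $X$ above'' is exactly what converts pointwise information about the members of $X$ into information about $\ior X$. The remaining bookkeeping, namely the two-valued behaviour of $a\land x$ below an atom and the complementation equivalence $a\land x=0\eq x\leq -a$, is entirely routine.
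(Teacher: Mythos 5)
Your proof is correct: the cyclic chain $(\ref{con:atom})\Rightarrow(\ref{con:compjoinirr})\Rightarrow(\ref{con:joinirr})\Rightarrow(\ref{con:complete})\Rightarrow(\ref{con:atom})$ is the standard argument, and each link checks out, including the key step where $a\not\leq x$ forces $a\land x=0$ by atomicity, hence $x\leq -a$, so that $\ior X\leq -a$ by leastness of the join (note this even disposes of the case $X=\emptyset$ automatically, since $\ior\emptyset=0\leq -a$ still yields the contradiction $a=0$). The paper states this proposition in its preliminaries without proof, as a well-known fact, so there is no authorial argument to diverge from; your write-up supplies exactly the expected routine verification.
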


\begin{proposition}
Let $A$ and $B$ be complete atomic Boolean algebras and  
$f:A\yy B$ a homomorphism of complete Boolean algebras. 
If $b\in\atom(B)$, then $\ladj{f}(b)\in\atom(A)$. 
\end{proposition}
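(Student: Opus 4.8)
The plan is to show that $a_0:=\ladj{f}(b)$ is an atom of $A$ by verifying, for this element, condition~(\ref{con:complete}) of Proposition~\ref{atom-property} together with $a_0\neq 0$; by the equivalence of~(\ref{con:atom}) and~(\ref{con:complete}) in that proposition, this suffices.

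First I would record the unit inequality $b\le f(\ladj{f}(b))$, which comes directly from the equivalence $b\le f(a)\eq\ladj{f}(b)\le a$ in~(\ref{adjoint}) by setting $a=\ladj{f}(b)$ (the right-hand side then holds trivially). Using this I would dispose of the non-triviality of $a_0$: were $a_0=0$, the inequality would give $b\le f(0)=0$, contradicting $b\in\atom(B)$. Hence $\ladj{f}(b)\neq 0$. Next, to check~(\ref{con:complete}), I would fix an arbitrary $x\in A$ and apply Proposition~\ref{atom-property}(\ref{con:complete}) to the atom $b$ and the element $f(x)\in B$, obtaining $b\le f(x)$ or $b\le -f(x)$. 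Since $f$ is a Boolean homomorphism, $-f(x)=f(-x)$, so in either case~(\ref{adjoint}) converts the inequality into $\ladj{f}(b)\le x$ or $\ladj{f}(b)\le -x$, respectively, which is exactly~(\ref{con:complete}) for $a_0$.

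The computation is short, and the only step needing genuine care is the verification that $a_0\neq 0$: condition~(\ref{con:complete}) is vacuously satisfied by $0$ as well, so without the separate non-triviality argument one could not conclude that $\ladj{f}(b)$ is an atom rather than the bottom element. This is where the adjunction, specifically the unit inequality forcing $b\le f(a_0)$, does the essential work, together with the hypothesis that $b$ itself is non-zero.
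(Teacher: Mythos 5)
Your proof is correct: the paper states this proposition without proof, and your argument --- deriving the unit inequality $b\le f(\ladj{f}(b))$ from (\ref{adjoint}) to rule out $\ladj{f}(b)=0$, then transporting condition (\ref{con:complete}) of Proposition~\ref{atom-property} from $b$ to $\ladj{f}(b)$ via $-f(x)=f(-x)$ and the adjunction --- is precisely the standard argument the author leaves implicit, with the one delicate point (non-triviality of $\ladj{f}(b)$, without which condition (\ref{con:complete}) alone proves nothing) correctly identified and handled. A minor observation: you never use atomicity of $A$ or $B$, so your proof in fact establishes the statement for arbitrary complete Boolean algebras.
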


\begin{definition}\label{kripke}\rm
A {\em Kripke frame} is a pair $\langle W,R\rangle$, 
where $W$ is a non-empty set and $R$ is a binary relation on $W$. 
Let $F_{1}=\langle W_{1},R_{1}\rangle$ and  
$F_{2}=\langle W_{2},R_{2}\rangle$ be Kripke frames. 
A {\em homomorphism $f:F_{1}\yy F_{2}$ of Kripke frames} is 
a mapping from $W_{1}$ to $W_{2}$ which satisfies the following:
\begin{enumerate}
\item
for any $v$ and $w$ in $W_{1}$, 
if $v\rel{R_{1}}w$ then $f(v)\rel{R_{2}}f(w)$;

\item
for any $w\in W_{1}$ and $u\in W_{2}$,  
if $f(w)\rel{R_{2}}u$ then there exists 
$v\in W_{1}$ such that $w\rel{R_{1}}v$ and 
$f(v)=u$. 
\end{enumerate}
We write $\kfr$ for the category of all Kripke frames. 
\end{definition}

\section{The category of complete atomic modal algebras}

\begin{definition}\rm
An algebra $\langle A;\lor,\land,-,\mdl,0,1\rangle$ is called 
a {\em modal algebra}
if its reduct $\langle A;\lor,\land,-,0,1\rangle$ is a Boolean algebra 
and $\mdl$ is a unary operator
which satisfies 
$
\mdl 0=0
$
and
$$
\mdl x\lor \mdl y=\mdl(x\lor y)
$$
for any $x$ and $y$ in $A$. 
A modal algebra $A$ is said to be {\em complete} or {\em atomic} if its 
Boolean reduct is complete or atomic, respectively. 
Let $A$ and $B$ be modal algebras. A mapping $f:A\yy B$ is called 
a {\em homomorphism of modal algebras} if $f$ is a homomorphism 
of Boolean algebras which satisfies
$$
f(\mdl x)=\mdl f(x)
$$
for any $x\in A$. 
A homomorphism $f$ of modal algebras is called 
a {\em homomorphism of complete modal algebras} if it is a homomorphism 
of complete Boolean algebras. 
\end{definition}

\begin{definition}\label{def:additive}\rm
A complete modal algebra $A$ is said to be {\em completely additive}
if
\begin{equation}\label{eqbarcan}
\ior_{x\in X}\mdl x=\mdl\ior X
\end{equation}
holds for any $X\subseteq A$. 
Let $\kappa$ be a cardinal number. 
A complete modal algebra $A$ is said to be {\em $\kappa$-additive} 
if the equation (\ref{eqbarcan})
holds for any $X\subseteq A$ such that $|X|<\kappa$. 
\end{definition}

\begin{definition}\rm
The objects of the category $\cama_{\infty}$ are
all completely additive complete atomic modal algebras and 
the arrows of it are all homomorphisms 
of complete modal algebras between them. 
Let $\kappa$ be a cardinal number. 
The objects of the category $\cama_{\kappa}$ are
all $\kappa$-additive complete atomic modal algebras and 
the arrows of it are all homomorphisms 
of complete modal algebras between them. 
\end{definition}

\begin{theorem}{\rm (Thomason \cite{thm75})}. 
$\cama_{\infty}$ and $\kfr$ are dually equivalent. 
\end{theorem}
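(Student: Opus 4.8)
The plan is to exhibit the dual equivalence by a pair of contravariant functors together with natural isomorphisms witnessing that the two composites are isomorphic to the identities. For the direction $\kfr\yy\cama_{\infty}$, I would assign to a Kripke frame $F=\langle W,R\rangle$ its complex algebra $F^{+}=\langle\power(W);\cup,\cap,-,\mdl,\emptyset,W\rangle$, where $\mdl X=\drel{R}X$ for $X\subseteq W$. The Boolean reduct $\power(W)$ is complete and atomic, its atoms being the singletons, and since $\drel{R}$ manifestly commutes with arbitrary unions (and $\drel{R}\emptyset=\emptyset$), the algebra $F^{+}$ is a completely additive complete atomic modal algebra, hence an object of $\cama_{\infty}$. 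For the direction $\cama_{\infty}\yy\kfr$, I would assign to $A$ the frame $A_{+}=\langle\atom(A),R_{A}\rangle$, where for atoms $a,b$ we put $a\rel{R_{A}}b$ iff $a\leq\mdl b$.

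On arrows, a homomorphism $f:F_{1}\yy F_{2}$ of Kripke frames should be sent to the preimage map $f^{+}=f^{-1}[-]:\power(W_{2})\yy\power(W_{1})$, which is automatically a homomorphism of complete Boolean algebras. The essential check is that $f^{+}$ commutes with $\mdl$, that is $f^{-1}[\drel{R_{2}}Y]=\drel{R_{1}}f^{-1}[Y]$ for all $Y\subseteq W_{2}$; here the two clauses of Definition~\ref{kripke} are used, one per inclusion. Conversely, a homomorphism $h:A\yy B$ of complete modal algebras should be sent to $h_{+}=\ladj{h}|_{\atom(B)}:\atom(B)\yy\atom(A)$, which takes values in $\atom(A)$ by the earlier proposition that left adjoints carry atoms to atoms. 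I would then verify that $h_{+}$ satisfies the two clauses of Definition~\ref{kripke} using the adjunction (\ref{adjoint}) together with $h\circ\mdl=\mdl\circ h$: clause (1), whose hypothesis is $b_{1}\leq\mdl b_{2}$, amounts after transposition to the inequality $b_{1}\leq\mdl(h(\ladj{h}(b_{2})))$, which holds because $b_{2}\leq h(\ladj{h}(b_{2}))$ and $\mdl$ is monotone, whereas clause (2) is the delicate one.

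Next I would produce the natural isomorphisms. The map $\eta_{F}:F\yy(F^{+})_{+}$ given by $w\mapsto\{w\}$ is a frame isomorphism once one checks that $\{v\}\rel{R_{A}}\{w\}$ unwinds to $v\rel{R}w$ under the orientation $a\rel{R_{A}}b\iff a\leq\mdl b$; this is precisely why that orientation of $R_{A}$, rather than its converse, is forced. The map $\epsilon_{A}:A\yy(A_{+})^{+}$ given by $x\mapsto\{a\in\atom(A)\mid a\leq x\}$ is the usual atom representation, a Boolean isomorphism because $A$ is complete and atomic; the only nonroutine point is that it preserves $\mdl$. Here complete additivity is indispensable: writing $\mdl x=\ior_{a\in\atom(A),\,a\leq x}\mdl a$ shows that $a\leq\mdl x$ holds iff $a\leq\mdl b$ for some atom $b\leq x$, which is exactly the identity $\epsilon_{A}(\mdl x)=\drel{R_{A}}\epsilon_{A}(x)$, the direction from left to right invoking Proposition~\ref{atom-property}(\ref{con:compjoinirr}) to split the atom $a$ across the join. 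Naturality of $\eta$ and $\epsilon$ is then a routine diagram chase, completing the dual equivalence.

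I expect the main obstacle to be clause (2) of Definition~\ref{kripke} for $h_{+}$, the ``back'' condition: given atoms $b\in\atom(B)$ and $a\in\atom(A)$ with $\ladj{h}(b)\rel{R_{A}}a$, that is $\ladj{h}(b)\leq\mdl a$, one must produce an atom $b'\in\atom(B)$ with $b\rel{R_{B}}b'$ and $\ladj{h}(b')=a$. My plan is to transpose $\ladj{h}(b)\leq\mdl a$ across the adjunction to $b\leq h(\mdl a)=\mdl(h(a))$, expand $h(a)=\ior_{b'\in\atom(B),\,b'\leq h(a)}b'$ over atoms, and use complete additivity of $B$ to rewrite $\mdl(h(a))=\ior_{b'\leq h(a)}\mdl b'$; since $b$ is an atom lying below this join, Proposition~\ref{atom-property}(\ref{con:compjoinirr}) yields an atom $b'\leq h(a)$ with $b\leq\mdl b'$, i.e. $b\rel{R_{B}}b'$, and transposing $b'\leq h(a)$ back through the adjunction gives $\ladj{h}(b')\leq a$, whence $\ladj{h}(b')=a$ as both sides are atoms. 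This is the one place where complete additivity of the \emph{codomain} $B$ is essential, paralleling its role in the counit $\epsilon$.
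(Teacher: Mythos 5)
Your proposal is correct and takes essentially the same route as the paper: the identical pair of contravariant functors (the complex algebra $\langle\power(W);\cup,\cap,W\setminus-,\drel{R},\emptyset,W\rangle$ and the atom structure with $a\rel{R_{A}}b\eq a\leq\mdl b$, with preimage maps and $\ladj{h}$ restricted to atoms on arrows) together with the same natural isomorphisms $w\mapsto\{w\}$ and $x\mapsto\{a\in\atom(A)\mid a\leq x\}$. The paper only sketches these definitions and asserts the isomorphisms, whereas you correctly supply the omitted verifications, notably the back condition for $\ladj{h}$ and the $\mdl$-preservation of the atom representation, both via complete additivity and Proposition~\ref{atom-property}.
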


\begin{proof}
First, we define a functor $F:\cama_{\infty}\yy\kfr$.  
For any object $A$ of $\cama_{\infty}$, define $F(A)$ by
$$
F(A)=\langle \atom(A),R\rangle, 
$$
where, 
$$
a\rel{R}b\eq a\leq \mdl b
$$
for any $a$ and $b$ in $\atom(A)$, 
and for any 
arrow $f:A\yy B$ of $\cama_{\infty}$, define 
$F(f):F(B)\yy F(A)$ by
$$
F(f)(b)=\ladj{f}(b)
$$
for any $b\in\atom(B)$. 
Next, 
we define a functor $G:\kfr\yy\cama_{\infty}$.  
For any object $K=\langle W,R\rangle$ of $\kfr$, 
define $G(K)$ by 
$$
G(K)=
\langle
\power(W);\cup,\cap,W\setminus-,\mdl_{K},\emptyset,W
\rangle, 
$$
where
$$
\mdl_{K}X=\drel{R}X
$$
for any $X\subseteq W$,
and for any arrow $g$ from 
$K_{1}=\langle W_{1},R_{1}\rangle$ 
to
$K_{2}=\langle W_{2},R_{2}\rangle$ 
of $\kfr$, define $G(g):G(K_{2})\yy G(K_{1})$ by 
$$
G(g)(X)=g^{-1}[X]
$$
for any $X\in\power(W_{2})$. 
Then $F:\cama_{\infty}\yy\kfr$ and $G:\kfr\yy\cama_{\infty}$ are 
well-defined contravariant functors and
$$
\ident_{\cama_{\infty}}\cong G\circ F,\ \ 
\ident_{\kfr}\cong F\circ G.  
$$
\end{proof}

\section{The category of neighborhood frames}\label{section:nfr}

A {\em neighborhood frame} is a pair
$\langle C, \nsystem\rangle$, where 
$C$ is a non-empty set and 
$\nsystem$ is a mapping from $C$ to $\power(\power(C))$. 
A neighborhood frame $\langle C, \nsystem\rangle$ is said to {\em include the whole set}
if for any $c\in C$, $C\in\nsystem(c)$, and 
is said to be {\em upward closed}
if for any $c\in C$, $X\in\nsystem(c)$, and $Y\subseteq C$, 
if $X\subseteq Y$ then $Y\in\nsystem(c)$. 
A neighborhood frame $\langle C, \nsystem\rangle$ is said to be 
{\em complete}
if it includes the whole set, is upward closed, and 
for any $c\in C$ and non-empty subset $S$ of $\nsystem(c)$, 
\begin{equation}\label{completenfr}
S\subseteq\nsystem(c)\thn\bigcap S\in \nsystem(c). 
\end{equation}
Let $\kappa$ be a cardinal number. 
A neighborhood frame $\langle C, \nsystem\rangle$ 
is said to be {\em $\kappa$-complete}
if it includes the whole set, is upward closed, and 
(\ref{completenfr}) holds 
for any non-empty subset $S$ of $\nsystem(c)$ such that 
$|S|<\kappa$.

Let $Z_{1}=\langle C_{1},\nsystem_{1}\rangle$ and $Z_{2}=\langle C_{2},\nsystem_{2}\rangle$
be neighborhood frames. 
A mapping $f:C_{1}\yy C_{2}$ is called a 
{\em homomorphism of neighborhood frames} from $Z_{1}$ to $Z_{2}$ if for 
any $c\in C_{1}$ and $X\subseteq C_{2}$, 
$$
f^{-1}[X]\in \nsystem_{1}(c)\eq X\in \nsystem_{2}(f(c))
$$
holds. 

We write $\nfr$ for the category of all neighborhood frames. 
We also write $\nfr_{\infty}$ and $\nfr_{\kappa}$ for its full subcategories of all 
complete neighborhood frames and 
all $\kappa$-complete neighborhood frames, respectively.  
The duality theorem between $\nfr_{\omega}$ and $\cama$ and that
between $\nfr_{\infty}$ and $\cama_{\infty}$, which are 
given in Do\v{s}en \cite{dsn89}, can be generalized to 
any cardinal number $\kappa$, immediately:

\begin{theorem}\label{camanfr}
{\rm (Do\v{s}en \cite{dsn89})}. 
For any cardinal number $\kappa$, 
$\cama_{\kappa}$ and $\nfr_{\kappa}$ are dually equivalent. 
\end{theorem}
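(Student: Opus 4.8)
The plan is to imitate Thomason's duality together with Do\v{s}en's construction, replacing Kripke frames by $\kappa$-complete neighborhood frames and keeping track of where the cardinal bound is consumed. I would build a pair of contravariant functors $F\colon\cama_{\kappa}\yy\nfr_{\kappa}$ and $G\colon\nfr_{\kappa}\yy\cama_{\kappa}$ and then exhibit natural isomorphisms $\ident_{\cama_{\kappa}}\cong G\circ F$ and $\ident_{\nfr_{\kappa}}\cong F\circ G$. On objects I would set $F(A)=\langle\atom(A),\nsystem_{A}\rangle$, where for each atom $a$
\[
\nsystem_{A}(a)=\{X\subseteq\atom(A)\mid a\leq-\mdl(-\ior X)\},
\]
and let $G$ send $Z=\langle C,\nsystem\rangle$ to the algebra on $\power(C)$ whose diamond is $\mdl_{Z}Y=\{c\in C\mid C\setminus Y\notin\nsystem(c)\}$; writing $\Box=-\mdl(-\,\cdot\,)$ for the dual operator, the associated box is $\Box_{Z}Y=\{c\mid Y\in\nsystem(c)\}$. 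On arrows I would put $F(f)=\ladj{f}$ restricted to atoms (legitimate, since $\ladj{f}$ carries atoms to atoms) and $G(h)=h^{-1}[-]$, exactly as in Thomason's theorem.

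Checking that these objects land in the right categories is the crux, and it is here, and only here, that $\kappa$ is used. Because $A$ is complete atomic, intersections of sets of atoms correspond to meets in $A$, that is $\ior\bigcap_{i}X_{i}=\iand_{i}\ior X_{i}$; from this, together with $\mdl 0=0$ and monotonicity of $\mdl$, it is immediate that $F(A)$ includes the whole set and is upward closed. For condition (\ref{completenfr}) with $|S|<\kappa$ I translate $\bigcap S\in\nsystem_{A}(a)$ into $a\not\leq\mdl\bigl(\ior_{X\in S}(-\ior X)\bigr)$; here $\kappa$-additivity turns the right-hand diamond into $\ior_{X\in S}\mdl(-\ior X)$, and Proposition~\ref{atom-property}(\ref{con:compjoinirr}) (if an atom lies below a join it lies below one of its members) lets me pass from ``$a$ below no $\mdl(-\ior X)$'' to ``$a$ below no join'', giving $\kappa$-completeness of $F(A)$. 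The converse, that $G(Z)$ is $\kappa$-additive, is the dual computation: $\Box_{Z}$ preserves $\kappa$-indexed intersections because each $\nsystem(c)$ is upward closed (one inclusion) and closed under $<\kappa$ intersections (the other), while $\mdl_{Z}\emptyset=\emptyset$ because each $\nsystem(c)$ contains the whole set.

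The morphism conditions, functoriality, and the natural isomorphisms then reduce to standard bookkeeping for complete atomic Boolean algebras. The single identity doing all the work is that a map $g\colon\atom(B)\yy\atom(A)$ is a homomorphism of neighborhood frames from $F(B)$ to $F(A)$ if and only if $g^{-1}[\Box_{A}X]=\Box_{B}g^{-1}[X]$ for all $X$. Since a homomorphism of modal algebras commutes with $-$ and $\mdl$, hence with $\Box$, and since under the atom--hat identification $f$ corresponds to $(\ladj{f})^{-1}[-]$ (this is exactly the adjunction (\ref{adjoint})), the modal-homomorphism condition on $f$ and the neighborhood-homomorphism condition on $F(f)$ become literally the same equation; the same identity handles $G(h)$.

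For the units I would take $\eta_{A}(x)=\widehat{x}:=\{a\in\atom(A)\mid a\leq x\}$ and $\epsilon_{Z}(c)=\{c\}$. That $\eta_{A}$ is an isomorphism of modal algebras reduces, via $\ior\widehat{-x}=-x$, to $\widehat{\mdl x}=\mdl_{F(A)}\widehat{x}$, and that $\epsilon_{Z}$ is an isomorphism of neighborhood frames reduces to $\mathcal{X}\in\nsystem_{G(Z)}(\{c\})\eq\{c'\mid\{c'\}\in\mathcal{X}\}\in\nsystem(c)$; naturality of both follows again from the adjunction identities. I expect no essential difficulty beyond the $\kappa$-completeness versus $\kappa$-additivity equivalence isolated in the second paragraph, which is the \emph{main obstacle}: it is the one place where the cardinal bound, $\kappa$-additivity, and Proposition~\ref{atom-property} must be combined simultaneously. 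Everything else is the familiar $\caba\simeq\mathbf{Set}^{\mathrm{op}}$ duality decorated with the modal operator.
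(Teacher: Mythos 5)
Your proposal is correct and takes essentially the same route as the paper: your $\nsystem_{A}(a)=\{X\subseteq\atom(A)\mid a\leq-\mdl(-\ior X)\}$ coincides with the paper's $\nsystem(a)=\{\atom(A)\cap\da x\mid a\not\leq\mdl-x\}$ under the bijection $X\mapsto\ior X$ (via Proposition~\ref{atom-property}), and your functors, arrow maps $\ladj{f}$ and $h^{-1}[-]$, and units $\eta_{A}(x)=\{a\in\atom(A)\mid a\leq x\}$, $\epsilon_{Z}(c)=\{c\}$ are exactly the paper's $J$, $K$, $\delta_{A}$, $\gamma_{Z}$. In fact you supply more verification (the $\kappa$-completeness of $F(A)$ via $\kappa$-additivity, the $\kappa$-additivity of $G(Z)$ via upward closure and $<\kappa$ intersections) than the paper's proof, which only defines the functors and natural isomorphisms and asserts well-definedness.
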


\begin{proof}
First, we define a functor $J:\cama_{\kappa}\yy\nfr_{\kappa}$. 
For any object $A$ of $\cama_{\kappa}$, define 
$J(A)$ by 
$$
J(A)=\langle \atom(A),\nsystem\rangle, 
$$
where 
$$
\nsystem(a)=\{\atom(A)\cap\da x\mid
a\not\leq\mdl-x\}
$$
for any $a$, 
and for any 
arrow $f:A\yy B$ of $\cama_{\kappa}$, define 
$J(f):J(B)\yy J(A)$ by 
$$
J(f)(b)=\ladj{f}(b)
$$
for any $b\in\atom(B)$. 
Next, 
we define a functor $K:\nfr_{\kappa}\yy\cama_{\kappa}$. 
For any object $Z=\langle C,\nsystem\rangle$ of $\nfr_{\kappa}$, 
define $K(Z)$ by 
$$
K(Z)=
\langle
\power(C);\cup,\cap,C\setminus-,\mdl_{Z},\emptyset,C
\rangle, 
$$
where
$$
\mdl_{Z}X=\{c\in C\mid C\setminus X\not\in\nsystem(c)\}
$$
for any $X\subseteq C$,
and for any arrow $g$ from 
$Z_{1}=\langle C_{1},\nsystem_{1}\rangle$ 
to
$Z_{2}=\langle C_{2},\nsystem_{2}\rangle$ 
of $\nfr_{\kappa}$, define $K(g):K(Z_{2})\yy K(Z_{1})$ by 
$$
K(g)(X)=g^{-1}[X]
$$
for any $X\in\power(C_{2})$. 
Then $J:\cama_{\kappa}\yy\nfr_{\kappa}$ and $K:\nfr_{\kappa}\yy\cama_{\kappa}$ are 
well-defined contravariant functors and 
$$
\delta:\ident_{\cama_{\kappa}}\cong K\circ J,\ \ \ 
\gamma:\ident_{\nfr_{\kappa}}\cong J\circ K,   
$$
where the natural isomorphisms $\delta$ and $\gamma$ are defined by 
$$
\delta_{A}:x\mapsto \{a\in\atom(A)\mid a\leq x\},
\ \ 
\gamma_{Z}:y\mapsto\{y\},  
$$
for any object $A$ in $\cama_{\kappa}$ and any $Z$ in $\nfr_{\kappa}$.  
\end{proof}

Do\v{s}en also proved the following equivalence of categories: 

\begin{theorem}\label{nfrkfr}
{\rm (Do\v{s}en \cite{dsn89})}. 
$\nfr_{\infty}\cong\kfr$. 
\end{theorem}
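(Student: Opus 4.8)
The plan is to exhibit a pair of \emph{covariant} functors between $\kfr$ and $\nfr_{\infty}$ that both fix the underlying sets and maps and are mutually inverse; this is exactly what makes the statement an equivalence (indeed an isomorphism of categories) rather than a duality. First I would define $\Phi\colon\kfr\yy\nfr_{\infty}$ by sending $\langle W,R\rangle$ to $\langle W,\nsystem\rangle$, where $\nsystem(w)=\{Y\subseteq W\mid \urel{R}w\subseteq Y\}$ is the principal up-set in $\langle\power(W),\subseteq\rangle$ generated by the successor set $\urel{R}w$, and sending a homomorphism to the same underlying map. Conversely I would define $\Psi\colon\nfr_{\infty}\yy\kfr$ by sending $\langle C,\nsystem\rangle$ to $\langle C,R\rangle$, where $c\rel{R}d\eq d\in\bigcap\nsystem(c)$, again keeping the underlying map on arrows. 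As a check that $R$ runs in the right direction I would verify $\mdl_{Z}X=\{c\mid C\setminus X\not\in\nsystem(c)\}=\{c\mid \urel{R}c\cap X\neq\emptyset\}=\drel{R}X=\mdl_{K}X$, matching the modal operators of the two preceding proofs; this is not needed for the bare equivalence but will matter for the composite duality with $\cama_{\infty}$.

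The structural observation driving everything is that completeness forces each $\nsystem(c)$ to be a principal up-set. Indeed, $\nsystem(c)$ is non-empty (it contains the whole set $C$) and closed under arbitrary non-empty intersections, so taking $S=\nsystem(c)$ in (\ref{completenfr}) shows that $m_{c}:=\bigcap\nsystem(c)\in\nsystem(c)$ is its least element; upward closure then gives $\nsystem(c)=\{Y\subseteq C\mid m_{c}\subseteq Y\}$. From this I would read off well-definedness and the object-level inverse laws at once: for $\Phi$ the system $\{Y\mid\urel{R}w\subseteq Y\}$ is plainly upward closed, contains $C$, and is closed under non-empty intersections, so $\Phi\langle W,R\rangle\in\nfr_{\infty}$; moreover $\bigcap\nsystem(w)=\urel{R}w$, whence $\Psi\Phi=\ident_{\kfr}$ on objects. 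In the other order $\urel{R}c=m_{c}$ for the relation produced by $\Psi$, so $\Phi\Psi$ rebuilds exactly $\{Y\mid m_{c}\subseteq Y\}=\nsystem(c)$, giving $\Phi\Psi=\ident_{\nfr_{\infty}}$ on objects.

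The heart of the argument is the matching of arrows, where I would show that for a single underlying map $f$ the Kripke-homomorphism conditions and the neighborhood-homomorphism condition are literally the same. Rewriting Definition~\ref{kripke}, conditions (1) and (2) say respectively $f[\urel{R_{1}}w]\subseteq\urel{R_{2}}f(w)$ and $\urel{R_{2}}f(w)\subseteq f[\urel{R_{1}}w]$, so $f$ is a Kripke homomorphism iff $f[\urel{R_{1}}w]=\urel{R_{2}}f(w)$ for every $w$. On the other hand, under $\Phi$ the condition $f^{-1}[X]\in\nsystem_{1}(c)\eq X\in\nsystem_{2}(f(c))$ unwinds, via $\urel{R_{1}}c\subseteq f^{-1}[X]\eq f[\urel{R_{1}}c]\subseteq X$, to the requirement that $f[\urel{R_{1}}c]\subseteq X\eq \urel{R_{2}}f(c)\subseteq X$ for all $X\subseteq C_{2}$. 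By the elementary principle that two subsets $A,B$ satisfy $(A\subseteq X\eq B\subseteq X)$ for all $X$ exactly when $A=B$ (instantiate $X=A$ and $X=B$), this is again $f[\urel{R_{1}}c]=\urel{R_{2}}f(c)$. Hence the two arrow-classes coincide, both functors are well defined on arrows, and—being identity on underlying maps—they are strictly mutually inverse, so $\Phi$ and $\Psi$ witness $\nfr_{\infty}\cong\kfr$.

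The main obstacle is conceptual rather than computational: recognizing that the three completeness axioms together collapse each neighborhood system to the principal up-set of a single successor set, which is precisely the data of a binary relation. Once that collapse is established, the remaining work is bookkeeping—keeping the variance covariant, choosing $\urel{R}$ rather than $\drel{R}$ so that the square with the modal functors commutes, and observing that the ``back'' condition (2) of a Kripke homomorphism is exactly the reverse inclusion supplied by the universally quantified neighborhood biconditional. I expect no genuinely hard step beyond the principal-up-set lemma.
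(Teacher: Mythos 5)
Your proof is correct. Note that the paper itself offers no proof of Theorem~\ref{nfrkfr}: it is quoted from Do\v{s}en, and the surrounding machinery recovers it only indirectly, as the $\kappa=\infty$ instance of Theorem~\ref{mkfnfr} (the equivalence $\mkf_{\kappa}\simeq\nfr_{\kappa}$, proved via the family of selection relations $R_{v}$ indexed by choice functions $v$ with $v(x)\in\nsystem(x)$) composed with the easy equivalence $\mkf_{\infty}\cong\kfr$ furnished by the functors $L$ and $M$. Your route is direct, and on this particular statement it is in fact sharper: the key observation that applying (\ref{completenfr}) to $S=\nsystem(c)$ itself (non-empty since $C\in\nsystem(c)$) produces a least element $m_{c}=\bigcap\nsystem(c)\in\nsystem(c)$, so that upward closure makes each $\nsystem(c)$ the principal up-set $\ua m_{c}$, collapses a complete neighborhood frame to exactly the data of a binary relation; consequently your $\Phi$ and $\Psi$, being identities on underlying maps with mutually inverse object assignments, give a strict isomorphism of categories rather than an equivalence up to natural isomorphism. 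Your arrow-level identification is also exactly right: both homomorphism notions reduce to $f\left[\urel{R_{1}}c\right]=\urel{R_{2}}f(c)$, with the back condition (2) of Definition~\ref{kripke} supplying the inclusion $\urel{R_{2}}f(c)\subseteq f\left[\urel{R_{1}}c\right]$ and the universally quantified neighborhood biconditional recovering equality by instantiating $X$ at each of the two sets. What your argument does not buy, and what the paper's heavier $R_{v}$ construction does, is the case $\kappa<\infty$: there $\nsystem(c)$ is closed only under intersections of size $<\kappa$, need not be principal, and no single relation carries the same information, which is precisely why the paper routes the general duality through $\mkf_{\kappa}$ instead of through Kripke frames.
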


For any Kripke frame $F=\langle W, R\rangle$, we can define a neighborhood frame $U(F)$ by 
$U(F)=\langle W, \{\urel{R}x\mid x\in W\}\rangle$. 
However, as is shown in Theorem~\ref{nfr-mkf} there exists 
a Kripke frame $F$ such that $U(F)$ is not a complete neighborhood frame 
and there exists a homomorphism $f:F_{1}\yy F_{2}$ of Kripke frames which is not 
a homomorphism of neighborhood frames from $U(F_{1})$ to $U(F_{2})$. 
In this sense, the neighborhood frames are not a generalization of the Kripke frames, 
although the two categories are equivalent.

\section{The category of multi-relational Kripke frames}

\begin{definition}\label{def:mkf}\rm
A pair $\langle W,S\rangle$ is called a {\em multi-relational 
Kripke frame} if $W$ is a non-empty set and 
$S$ is a non-empty set of binary relations on $W$. 
A multi-relational Kripke frame $\langle W,S\rangle$ is said to 
be {\em completely downward directed} if for any $S'\subseteq S$, 
there exists $R\in S$ such that 
\begin{equation}\label{dd}
R\subseteq \bigcap S'. 
\end{equation}
Clearly, $\langle W,S\rangle$ is completely downward directed if and only if 
$\bigcap S \in S$. 
Let $\kappa$ be a cardinal number. 
A multi-relational Kripke frame $\langle W,S\rangle$ is said to 
be {\em $\kappa$-downward directed} if for any 
$S'\subseteq S$ such that $|S'|<\kappa$, there exists $R\in S$ 
which satisfies (\ref{dd}). 
Let $M_{1}=\langle W_{1},S_{1}\rangle$ and $M_{2}=\langle W_{2},S_{2}\rangle$ be 
multi-relational Kripke frames. 
A mapping $f:W_{1}\yy W_{2}$ is called a {\em homomorphism of multi-relational 
Kripke frames} from $M_{1}$ to $M_{2}$ if it satisfies the following two conditions:
\begin{enumerate} 
\item 
for any $x\in W_{1}$ and $R_{2}\in S_{2}$, there exists $R_{1}\in S_{1}$ such 
that for any $y\in W_{1}$,
\begin{equation*}
x\rel{R_{1}}y\thn f(x)\rel{R_{2}}f(y);
\end{equation*}

\item
for any $x\in W_{1}$ and $R_{1}\in S_{1}$, there exists $R_{2}\in S_{2}$ such 
that for any $u\in W_{2}$,
\begin{equation*}
f(x)\rel{R_{2}}u \thn \mbox{$\exists y\in W_{1}$ such that 
$x\rel{R_{1}}y$
and
$f(y)=u$}. 
\end{equation*}
\end{enumerate}
A homomorphism of multi-relational Kripke
frames is an {\em isomorphism} if it is bijective. 
Indeed, if $f$ is an isomorphism, its inverse is also a homomorphism of 
multi-relational Kripke frames. 
\end{definition}

\begin{definition}
We write $\mkf$ for the category of all multi-relational Kripke frames. 
We also write 
$\mkf_{\infty}$ 
and 
$\mkf_{\kappa}$ 
for its full subcategories of all completely downward directed 
multi-relational Kripke frames and 
all $\kappa$-downward directed multi-relational Kripke frames,
respectively.  
\end{definition}

The following theorem states that the
multi-relational Kripke frames can be seen as a generalization of 
the Kripke frames:  

\begin{proposition}\label{kfrmkf}
For any Kripke frame $F=\langle W,R\rangle$, define $M(F)$ by 
$
M(F)=
\langle W,\{R\}\rangle,
$
and for any homomorphism $f:F_{1}\yy F_{2}$ of Kripke frames, define $M(f)$ by $f$. 
Then, $M$ is a well-defined functor and the image of $\kfr$ by $M$
is a full and faithful subcategory of $\mkf_{\infty}$. 
\end{proposition}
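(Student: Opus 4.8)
The plan is to verify the three requirements separately: that $M$ sends Kripke frames into $\mkf_{\infty}$, that $M$ is functorial (sends homomorphisms to homomorphisms, preserving identities and composition), and that the induced subcategory is full and faithful. The key observation throughout is that a one-relation multi-relational frame $\langle W,\{R\}\rangle$ collapses all the quantifiers over $S$ into the single choice $R$, so the conditions of Definition~\ref{def:mkf} should reduce exactly to the two clauses of Definition~\ref{kripke}.

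First I would check that $M(F)$ is an object of $\mkf_{\infty}$. Since $S=\{R\}$ is a non-empty set of binary relations, $M(F)$ is a multi-relational Kripke frame. For complete downward directedness I use the stated criterion $\bigcap S\in S$: here $\bigcap\{R\}=R\in\{R\}$, so $M(F)$ is completely downward directed. Hence $M(F)\in\mkf_{\infty}$.

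Next I would show $M(f)=f$ is a homomorphism of multi-relational Kripke frames whenever $f:F_{1}\yy F_{2}$ is a homomorphism of Kripke frames. For condition~(1), given $x\in W_{1}$ and the unique $R_{2}\in S_{2}$, I take the unique $R_{1}\in S_{1}$; then $x\rel{R_{1}}y$ means $x\rel{R_1}y$ in the original frame, and clause~(1) of Definition~\ref{kripke} gives $f(x)\rel{R_2}f(y)$. Condition~(2) is handled symmetrically using clause~(2) of Definition~\ref{kripke}. Functoriality is then immediate: $M(\ident)=\ident$ and $M(g\circ f)=g\circ f=M(g)\circ M(f)$, since $M$ acts as the identity on the underlying maps. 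Faithfulness is likewise automatic because $M(f)=f$, so distinct homomorphisms have distinct images.

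The only step requiring genuine care is fullness, which is where I expect the main obstacle. I must show that every homomorphism $g:M(F_{1})\yy M(F_{2})$ of multi-relational Kripke frames is of the form $M(f)$ for some Kripke homomorphism $f$, i.e.\ that the underlying map $g$ already satisfies the two clauses of Definition~\ref{kripke}. Because $S_{1}=\{R_{1}\}$ and $S_{2}=\{R_{2}\}$ are singletons, the existential choices ``there exists $R_{1}\in S_{1}$'' and ``there exists $R_{2}\in S_{2}$'' in conditions~(1) and~(2) of Definition~\ref{def:mkf} are forced, and after making them the two conditions read precisely as clauses~(1) and~(2) of Definition~\ref{kripke}. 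Thus $g$ is a Kripke homomorphism and $g=M(g)$, establishing fullness. The subtlety to double-check is purely bookkeeping: that the per-point quantifier ``for any $x\in W_{1}$'' in Definition~\ref{def:mkf} combines correctly with the singleton $S_i$ to recover the global clauses of Definition~\ref{kripke}, with no loss from the ordering of quantifiers.
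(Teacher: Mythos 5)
Your proposal is correct and follows exactly the route the paper intends: the paper's own proof is just the one-line remark ``Clear from the definition of the homomorphism of multi-relational Kripke frames,'' and your verification--that $\bigcap\{R\}=R\in\{R\}$ gives complete downward directedness, and that the singleton sets $S_{i}=\{R_{i}\}$ force the existential witnesses in Definition~\ref{def:mkf} so that its two conditions reduce to the two clauses of Definition~\ref{kripke} (yielding well-definedness, functoriality, faithfulness via $M(f)=f$, and fullness)--is precisely the spelled-out version of that observation.
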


\begin{proof}
Clear from the definition of the homomorphism of multi-relational Kripke frames. 
\end{proof}

It is easy to prove the equivalence of $\mkf_{\infty}$ and $\kfr$.  
Define a functor $L:\mkf_{\infty}\yy\kfr$ 
by
$$
L:\langle W,S\rangle\mapsto
\left\langle W,\bigcap S\right\rangle,\ \ 
L(f)=f. 
$$
Then it is easy to show that $L$ is a well-defined functor 
and both $L\circ M\cong\ident_{\kfr}$ and $M\circ L\cong\ident_{\mkf_{\infty}}$ hold.

\section{Equivalence between $\mkf$ and $\nfr$}\label{section:enfr}

For any multi-relational Kripke frame $M=\langle W,S\rangle$, 
we can define the "underlying"  neighborhood frame $U(M)$  by 
$U(M)=\langle W,\nsystem_{M}\rangle$, where 
$$
\nsystem_{M}(x)=\{\urel{R} x\mid R\in S\}. 
$$ 
However, $U$ does not define the forgetful functor 
from $\mkf$ to $\nfr$ nor 
that from $\mkf_{\kappa}$ to $\nfr_{\kappa}$. 
In fact, we have the following:

\begin{theorem}\label{nfr-mkf}
\begin{enumerate}
\item 
There exists an object $M$ of $\mkf_{\kappa}$ such that $U(M)$ is not an 
object of $\nfr_{\kappa}$. 
Moreover, there exists such an object $M$ in $\kfr$
such that $U(M)$ is not an object of $\nfr_{\infty}$. 
\item 
There exists an arrow $f:M_{1}\yy M_{2}$ of $\mkf_{\kappa}$ such that $U(M_{1})$ and 
$U(M_{2})$ are objects of $\nfr$ but $f$ is not an arrow of $\nfr$. 
Moreover, there exists such an arrow $f$ in $\kfr$, either. 
\item
There exists an arrow $f:U(M_{1})\yy U(M_{2})$ of $\nfr$ such that 
$M_{1}$ and $M_{2}$ are objects of $\mkf$ but 
$f:M_{1}\yy M_{2}$ is not an arrow of $\mkf$. 
\end{enumerate}
\end{theorem}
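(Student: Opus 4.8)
All three assertions are witnessed by explicit finite counterexamples, and my plan is built around a single observation: by definition $\nsystem_{M}(x)=\{\urel{R}x\mid R\in S\}$ records exactly the successor sets of $x$ and nothing more. In particular it is in general neither upward closed nor does it contain the whole set, and it may contain sets to which the defining equivalence of a neighborhood homomorphism is blind. I would isolate each of these three phenomena in turn.

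For (1) I would take a single-relation frame, that is, one in the image of $\kfr$ under $M$, having a point that fails to reach all of $W$; the simplest is $F=\langle\{0,1\},\{(0,0)\}\rangle$, for which $\nsystem_{M(F)}(0)=\{\{0\}\}$. This singleton neither contains the whole set $\{0,1\}$ nor is upward closed. Since both ``includes the whole set'' and ``upward closed'' are prerequisites for membership in $\nfr_{\kappa}$ for every cardinal $\kappa$ and equally for $\nfr_{\infty}$, the single frame $M(F)$ settles both halves of (1) at once; moreover $M(F)\in\mkf_{\infty}\subseteq\mkf_{\kappa}$ by Proposition~\ref{kfrmkf}.

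For (2) I would exploit the biconditional in the definition of a neighborhood homomorphism together with a non-surjective homomorphism of Kripke frames. Take $F_{1}=\langle\{c\},\{(c,c)\}\rangle$ and $F_{2}=\langle\{a,b\},\{(a,a)\}\rangle$ with $f(c)=a$; the two conditions of Definition~\ref{kripke} are immediate, so $f$ is an arrow of $\mkf_{\infty}\subseteq\mkf_{\kappa}$ via $M$. Here $\nsystem_{M(F_{1})}(c)=\{\{c\}\}$ and $\nsystem_{M(F_{2})}(a)=\{\{a\}\}$, so choosing $X=\{a,b\}$ gives $f^{-1}[X]=\{c\}\in\nsystem_{M(F_{1})}(c)$ while $X\notin\nsystem_{M(F_{2})}(a)$; thus $f^{-1}[X]\in\nsystem_{1}(c)\Leftrightarrow X\in\nsystem_{2}(f(c))$ fails and $f$ is not an arrow of $\nfr$. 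The mechanism is simply that enlarging $X$ by a point outside the image of $f$ leaves $f^{-1}[X]$ unchanged.

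Part (3) is the delicate one and I expect it to be the main obstacle: $f$ must now be a genuine neighborhood homomorphism while failing to be a homomorphism of multi-relational frames. The idea I would use is that a successor set $\urel{R}x$ which splits a fiber of $f$ is never of the form $f^{-1}[X]$, so it is invisible to the equivalence defining a neighborhood homomorphism (which only ever tests sets $f^{-1}[X]$), yet it can still violate clause (2) of Definition~\ref{def:mkf}. To make such invisible sets available while avoiding the constraints that a non-surjective $f$ would impose---whereby distinct subsets of $W_{2}$ with equal $f$-preimages are forced to share their status in $\nsystem_{2}$---I would take $f$ surjective but not injective. Concretely, let $M_{1}=\langle\{p,q,r\},\{R,R'\}\rangle$ with $R=\{(p,q),(q,r)\}$ and $R'=\{(p,r),(q,r)\}$, let $M_{2}=\langle\{a,b\},\{T\}\rangle$ with $T=\{(a,b)\}$, and set $f(p)=f(q)=a$, $f(r)=b$. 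One computes $\nsystem_{M_{1}}(p)=\{\{q\},\{r\}\}$, $\nsystem_{M_{1}}(q)=\{\{r\}\}$, $\nsystem_{M_{1}}(r)=\{\emptyset\}$ and $\nsystem_{M_{2}}(a)=\{\{b\}\}$, $\nsystem_{M_{2}}(b)=\{\emptyset\}$; running through the four subsets $X\subseteq\{a,b\}$ at each of $p,q,r$ then confirms that $f$ is a homomorphism of neighborhood frames from $U(M_{1})$ to $U(M_{2})$. On the other hand clause (2) of Definition~\ref{def:mkf} fails at $x=p$ with $R_{1}=R$: the only relation in $S_{2}$ is $T$, and the $T$-successor $b$ of $f(p)=a$ cannot be lifted along $f$ to an $R$-successor of $p$, since $\urel{R}p=\{q\}$ and $f(q)=a\neq b$. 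Hence $f:M_{1}\yy M_{2}$ is not an arrow of $\mkf$. The real work is carrying out the neighborhood check simultaneously at every point while keeping the fiber-splitting value $\{q\}$ present in $\nsystem_{M_{1}}(p)$; balancing these two requirements is where the construction has to be chosen with care.
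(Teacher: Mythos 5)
Your proposal is correct: I checked all twelve instances of the neighborhood-homomorphism biconditional in your part (3) example and they hold, while clause (2) of Definition~\ref{def:mkf} genuinely fails at $x=p$ with $R_{1}=R$ since the $T$-successor $b$ of $a$ cannot be lifted. This is essentially the paper's own proof: your part (2) is the paper's example up to renaming, and parts (1) and (3) are minor variants of the paper's witnesses exploiting the identical three mechanisms (failure of upward closure and whole-set inclusion, preimage blindness to points outside the image, and a fiber-splitting successor set that no $f^{-1}[X]$ can detect).
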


\begin{proof}
\noindent
(1): 
Let $M=\left\langle \{0\},\{\emptyset\}\right\rangle$. 
Then $M$ is an object of $\mkf_{\kappa}$, but not that of $\nfr_{\kappa}$, 
since $\nsystem_{M}(0)$ is not upward closed. 
If we identify a singleton $\{R\}$ of a relation with $R$, $M$ is a Kripke frame, either. 

\noindent
(2): 
Let
$M_{1}=\left\langle \{0\},\{\{(0,0)\}\}\right\rangle$
and
$M_{2}=\left\langle \{0,1\},\{\{(0,0)\}\}\right\rangle$. 
Let $f:0\mapsto 0$. 
It is easy to see that 
$f\in\hom_{\mkf_{\kappa}}(M_{1},M_{2})$.  
If we identify a singleton $\{R\}$ of a relation with $R$, $f$ is a homomorphism 
of Kripke frames, either. 
However, $f$ is not an arrow of $\nfr$ from $U(M_{1})$ to $U(M_{2})$, since 
$
f^{-1}\left[\{0,1\}\right]
=
\{0\}\in\nsystem_{M_{1}}(0)$, 
but $\{0,1\}\not\in\nsystem_{M_{2}}(0)$.

\noindent
(3): 
Let
$
M_{1}=\left\langle \{0,1,2\}, \{R_{1},R_{2}\}\right\rangle$ and 
$M_{2}=\left\langle \{0,1\}, \{Q\}\right\rangle,
$ 
where 
$$
R_{1}=\{(0,1)\},\ \ 
R_{2}=\{(0,0),(0,1),(0,2)\},\ \ 
Q=\{(0,0),(0,1)\}. 
$$
Then 
$$
\nsystem_{M_{1}}(0)=\{\{1\},\{0,1,2\}\},\ \
\nsystem_{M_{1}}(1)=\nsystem_{M_{1}}(2)=\{\emptyset\}
$$
and
$$\nsystem_{M_{2}}(0)=\{\{0,1\}\},\ \ 
\nsystem_{M_{2}}(1)=\{\emptyset\}. 
$$
Define $f:\{0,1,2\}\yy\{0,1\}$ by $f(0)=0$ and $f(1)=f(2)=1$. 
It is easy to see that $f\in \hom_{\nfr}(U(M_{1}),U(M_{2}))$.  
However, $f\not\in \hom_{\mkf}(M_{1},M_{2})$, since $0\rel{Q}0$ but 
$0\not\rel{R_{1}}0$.   
\end{proof}

If we identify $U(M)$ with $M$ and 
a singleton $\{R\}$ of a relation with $R$, 
Proposition~\ref{kfrmkf} and 
Theorem~\ref{nfr-mkf} can be summarized as follows: 

\begin{equation*}
\begin{array}{ccc}
\nfr
&
\mathop{{\not\supseteq}\atop{\not\subseteq}}\limits_{\mathrm{arrows}}
&
\mkf
\\
\mathop{\rotatebox[origin=c]{90}{$\subseteq$}}\limits_{\mathrm{\ }}
&
&
\mathop{\rotatebox[origin=c]{90}{$\subseteq$}}\limits_{\mathrm{\ }}
\\
\nfr_{\kappa}
&
\mathop{\subsetneqq}\limits_{\mathrm{objects}}
&
\mkf_{\kappa}
\\
\mathop{\rotatebox[origin=c]{90}{$\subseteq$}}\limits_{\mathrm{\ }}
&
&
\mathop{\rotatebox[origin=c]{90}{$\subseteq$}}\limits_{\mathrm{\ }}
\\
\nfr_{\infty}
&
\mathop{\subsetneqq}\limits_{\mathrm{objects}}
&
\kfr
\end{array}
\end{equation*}

In the rest of this section, we show that $\nfr_{\kappa}$ and $\mkf_{\kappa}$ 
are equivalent. First, we show the following lemmas:

\begin{lemma}\label{mkftonfr}
Let $\kappa$ be any cardinal number. 
For any $\kappa$-downward directed multi-relational Kripke frame $M=\langle W, S\rangle$, 
define a $\kappa$-complete neighborhood frame $N(M)$ by 
$
\langle W,\nsystem_{M}\rangle 
$,
where
$\nsystem_{M}\subseteq\power(W)$ is defined by 
$$
\nsystem_{M}=
\ua\{\urel{R}x\mid R\in S\} 
$$
for any $x\in W$, 
and for any homomorphism $f$ of multi-relational Kripke frames, 
define $N(f)$ by $f$. 
Then $N$ is a full functor from $\mkf_{\kappa}$ to $\nfr_{\kappa}$.
\end{lemma}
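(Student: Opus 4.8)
The plan is to verify in turn that $N$ sends objects to objects, sends arrows to arrows (respecting identities and composition), and is full. Throughout, the recurring device is that $Y\in\nsystem_M(x)$ means precisely that $\urel{R}x\subseteq Y$ for some $R\in S$, so every statement about the neighborhood system translates back into a statement about the relations in $S$.

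First I would check that $N(M)=\langle W,\nsystem_M\rangle$ is a $\kappa$-complete neighborhood frame. Upward closure is immediate because $\nsystem_M(x)$ is defined as an upward closure, and since $S$ is nonempty we have some $\urel{R}x\in\nsystem_M(x)$ with $\urel{R}x\subseteq W$, so $W\in\nsystem_M(x)$; thus $N(M)$ includes the whole set. The only substantive point is (\ref{completenfr}): given a nonempty $\mathcal{T}\subseteq\nsystem_M(x)$ with $|\mathcal{T}|<\kappa$, each $Y\in\mathcal{T}$ satisfies $\urel{R_Y}x\subseteq Y$ for some $R_Y\in S$, and the family $\{R_Y\mid Y\in\mathcal{T}\}$ has cardinality $<\kappa$; $\kappa$-downward directedness then supplies $R\in S$ with $R\subseteq\bigcap_Y R_Y$, whence $\urel{R}x\subseteq\urel{R_Y}x\subseteq Y$ for every $Y$ and so $\urel{R}x\subseteq\bigcap\mathcal{T}$, giving $\bigcap\mathcal{T}\in\nsystem_M(x)$. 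This is the step where $\kappa$-downward directedness is exactly what is required, and I expect it to be the conceptual heart of the object part.

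Next I would show $N(f)=f$ is a homomorphism of neighborhood frames, i.e. that $f^{-1}[X]\in\nsystem_{M_1}(c)\eq X\in\nsystem_{M_2}(f(c))$, using the two clauses of Definition~\ref{def:mkf}, one for each direction of the biconditional. For ``$\Leftarrow$'', from $\urel{R_2}(f(c))\subseteq X$ I would apply clause (1) to $c$ and $R_2$ to obtain $R_1\in S_1$ with $c\rel{R_1}y\thn f(c)\rel{R_2}f(y)$, so that $\urel{R_1}c\subseteq f^{-1}[X]$. For ``$\Rightarrow$'', from $\urel{R_1}c\subseteq f^{-1}[X]$ I would apply clause (2) to $c$ and $R_1$ to obtain $R_2\in S_2$ witnessing $\urel{R_2}(f(c))\subseteq X$. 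Preservation of identities and composites is immediate since $N$ acts as the identity on underlying maps.

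Finally, for fullness I would start from an arbitrary arrow $g\colon N(M_1)\yy N(M_2)$ of $\nfr_\kappa$ and verify that $g$, regarded as a map $W_1\yy W_2$, already satisfies both clauses of Definition~\ref{def:mkf}, so that $g$ itself is a morphism $M_1\yy M_2$ of $\mkf_\kappa$ with $N(g)=g$. The trick is to feed the right test sets into the biconditional governing $g$: for clause (1), fix $x$ and $R_2\in S_2$ and apply it to $X=\urel{R_2}(g(x))\in\nsystem_{M_2}(g(x))$ to get $g^{-1}[X]\in\nsystem_{M_1}(x)$, which yields the required $R_1$; for clause (2), fix $x$ and $R_1\in S_1$ and apply it to $X=g[\urel{R_1}x]$, noting that $g^{-1}[X]\supseteq\urel{R_1}x$ so $g^{-1}[X]\in\nsystem_{M_1}(x)$, which yields $R_2\in S_2$ with $\urel{R_2}(g(x))\subseteq g[\urel{R_1}x]$ and hence the back-condition. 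The main obstacle is precisely this reverse-engineering in the fullness step: one must choose $X$ so that the single biconditional defining a neighborhood homomorphism unpacks into the two asymmetric back-and-forth conditions defining a homomorphism of multi-relational Kripke frames.
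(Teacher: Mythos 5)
Your proof is correct and takes essentially the same route as the paper's: the same translation of $Y\in\nsystem_{M}(x)$ into $\exists R\in S\left(\urel{R}x\subseteq Y\right)$, the same pairing of the two clauses of Definition~\ref{def:mkf} with the two directions of the neighborhood-homomorphism biconditional, and, for fullness, the same test sets $\urel{R_{2}}(g(x))$ and $g\left[\urel{R_{1}}x\right]$ together with the upward-closure observation $\urel{R_{1}}x\subseteq g^{-1}\left[g\left[\urel{R_{1}}x\right]\right]$. The only difference is that you spell out the object-level verification ($\kappa$-completeness of $N(M)$ via $\kappa$-downward directedness), which the paper declares clear — a harmless and indeed welcome addition.
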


\begin{proof}
It is clear that $N(M)$ is an  object of $\nfr_{\kappa}$. 
We show that 
for any $M_{1}=\langle W_{1},S_{1}\rangle$ 
and 
$M_{2}=\langle W_{2},S_{2}\rangle$,  
$$
\hom_{\mkf_{\kappa}}(M_{1},M_{2})=\hom_{\nfr_{\kappa}}(N(M_{1}),N(M_{2})).  
$$
\noindent
($\subseteq$): 
Suppose $f\in\hom_{\mkf_{\kappa}}(M_{1},M_{2})$. Take any $x\in W_{1}$ and $Y\subseteq W_{2}$. 
By definition of $\nsystem_{M_{1}}(x)$ and $\nsystem_{M_{2}}(x)$, 
\begin{eqnarray*}
Y\in\nsystem_{M_{2}}(f(x))
&\eq&
\exists Q\in S_{2}\left(\urel{Q}f(x)\subseteq Y\right)\\
&\thn&
\exists R\in S_{1}\left(f\left[\urel{R} x\right]\subseteq \urel{Q}f(x)\subseteq Y\right)\\
&\thn&
\exists R\in S_{1}\left(\urel{R} x\subseteq f^{-1}\left[Y\right]\right)\\
&\eq&
f^{-1}\left[Y\right]\in\nsystem_{M_{1}}(x).
\end{eqnarray*}
Conversely, 
\begin{eqnarray*}
f^{-1}\left[Y\right]\in\nsystem_{M_{1}}(x)
&\eq&
\exists R\in S_{1}\left(\urel{R}x\subseteq f^{-1}\left[Y\right]\right)\\
&\thn&
\exists Q\in S_{2}\left(\urel{Q}f(x)\subseteq f\left[\urel{R}x\right]\subseteq
f\left[f^{-1}\left[Y\right]\right]\right)\\
&\thn&
\exists Q\in S_{2}\left(\urel{Q}f(x)\subseteq Y\right)\\
&\eq&
Y\in\nsystem_{M_{2}}(f(x)). 
\end{eqnarray*}

\noindent
($\supseteq$):
Suppose $f\in\hom_{\nfr_{\kappa}}(N(M_{1}),N(M_{2}))$ and $x\in W_{1}$. 
First, take any $Q\in S_{2}$.
Then 
$
f^{-1}\left[\urel{Q}f(x)\right]$ is in $\nsystem_{M_{1}}(x)$, 
since $\urel{Q}f(x)\in \nsystem_{M_{2}}(f(x))$.  
Hence, there exists $R\in S_{1}$ such that 
$$
\urel{R}x\subseteq f^{-1}\left[\urel{Q}f(x)\right]. 
$$
Then for any $y\in W_{1}$,  if $x\rel{R} y$ then $f(x)\rel{Q} f(y)$.  
Next, take any $R\in S_{1}$. 
Since $\urel{R}x\in\nsystem_{M_{1}}(x)$ and $\nsystem_{M_{1}}(x)$ is upward closed, 
$$
\urel{R}x\subseteq
f^{-1}\left[f\left[\urel{R}x\right]\right]\in\nsystem_{M_{1}}(x). 
$$
Hence, $f\left[\urel{R}x\right]\in\nsystem_{M_{2}}(f(x))$. 
Then there exists $Q\in S_{2}$ such that 
$\urel{Q}f(x)\subseteq f\left[\urel{R}x\right]$. 
Then for any $u\in W_{2}$ such that $f(x)\rel{Q}u$, 
there exists $y\in W_{2}$ such that $x\rel{R}y$ and $f(y)=u$. 
\end{proof}

\begin{lemma}\label{nfrtomkf}
Let $Z=\langle C,\nsystem\rangle$ be any $\kappa$-complete neighborhood frame.  
We write $V_{Z}$ for the set 
$$
V_{Z}=\{v:C\yy\power(C)\mid \forall x\in C(v(x)\in\nsystem(x))\},  
$$
and for any $v\in V_{Z}$, we write $R_{v}$ for a binary relation on $C$ 
defined by 
$$
R_{v}=\{(x,y)\mid x\in C,\ y\in v(x)\}.  
$$
Then 
$
H(Z)=
\langle C,S_{Z}\rangle, 
$
is a 
$\kappa$-downward directed multi-relational Kripke frame, 
where 
$
S_{Z}=\{R_{v}\mid v\in V_{Z}\}$.  
If we define $H(f)$ by $f$ 
for any homomorphism $f$ of neighborhood frames, 
then $H$ is a functor from $\nfr_{\kappa}$ to $\mkf_{\kappa}$.
\end{lemma}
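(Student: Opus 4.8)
The plan is to verify the two assertions in turn: first, that $H(Z)$ is a $\kappa$-downward directed multi-relational Kripke frame (the action of $H$ on objects), and second, that $H$ is functorial — the substantive point being that $H(f)=f$ is again a homomorphism, with preservation of identities and composition immediate because $H$ leaves the underlying map untouched. The computation underlying everything is that $\urel{R_v}x=v(x)$ for every $v\in V_{Z}$ and $x\in C$, so that a relation $R_{v}\in S_{Z}$ is recoverable from its choice function through $v(x)=\urel{R_v}x$.

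For the object part, I would first note that $V_{Z}$ is non-empty: since $Z$ includes the whole set, the constant map $x\mapsto C$ lies in $V_{Z}$, whence $S_{Z}\neq\emptyset$ and $H(Z)$ is a genuine multi-relational Kripke frame. To establish $\kappa$-downward directedness, take $S'\subseteq S_{Z}$ with $|S'|<\kappa$; the case $S'=\emptyset$ is dispatched by any element of the non-empty $S_{Z}$. Otherwise, writing each $R\in S'$ as $R_{v_{R}}$, I would set $v(x)=\bigcap_{R\in S'}v_{R}(x)$ for every $x$. As $\{v_{R}(x)\mid R\in S'\}$ is a non-empty subset of $\nsystem(x)$ of cardinality less than $\kappa$, $\kappa$-completeness gives $v(x)\in\nsystem(x)$, so $v\in V_{Z}$; and $R_{v}\subseteq\bigcap S'$ holds because $y\in v(x)$ forces $y\in v_{R}(x)$, i.e. $(x,y)\in R$, for every $R\in S'$. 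This is the step where $\kappa$-completeness is used essentially.

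For functoriality, the real content is that $f$ is a homomorphism of multi-relational Kripke frames from $H(Z_{1})$ to $H(Z_{2})$ whenever $f\colon Z_{1}\yy Z_{2}$ is a homomorphism of neighborhood frames. The guiding observation is that each of the two defining conditions constrains a witnessing choice function only at a single point, so I may set it to the whole set at all other points (legitimate by whole-set inclusion) and arrange only the one relevant value. For the forth condition, given $x\in C_{1}$ and $R_{2}=R_{v_{2}}\in S_{Z_{2}}$, I would put $v_{1}(x)=f^{-1}[v_{2}(f(x))]$; the homomorphism biconditional, applied to the true instance $v_{2}(f(x))\in\nsystem_{2}(f(x))$, yields $f^{-1}[v_{2}(f(x))]\in\nsystem_{1}(x)$, so $R_{1}=R_{v_{1}}\in S_{Z_{1}}$, and $y\in v_{1}(x)$ then gives $f(y)\in v_{2}(f(x))$, i.e. $f(x)\rel{R_{2}}f(y)$. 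For the back condition, given $x\in C_{1}$ and $R_{1}=R_{v_{1}}\in S_{Z_{1}}$, I would put $v_{2}(f(x))=f[v_{1}(x)]$; here I first use upward closure — from $v_{1}(x)\subseteq f^{-1}[f[v_{1}(x)]]$ and $v_{1}(x)\in\nsystem_{1}(x)$ one gets $f^{-1}[f[v_{1}(x)]]\in\nsystem_{1}(x)$ — and then the homomorphism biconditional to conclude $f[v_{1}(x)]\in\nsystem_{2}(f(x))$, so $R_{2}=R_{v_{2}}\in S_{Z_{2}}$; then $f(x)\rel{R_{2}}u$ means $u\in f[v_{1}(x)]$, furnishing $y\in v_{1}(x)$ with $f(y)=u$, so $x\rel{R_{1}}y$ and $f(y)=u$.

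Finally, preservation of identities and composition is immediate, since $H$ acts as the identity on underlying maps and the composite of two neighborhood-frame homomorphisms is again one. I expect the back condition to be the main obstacle, as it is the only place that combines upward closure with the homomorphism biconditional and requires the asymmetric manoeuvre of pushing $v_{1}(x)$ forward through $f$ and recognizing its image as a neighborhood of $f(x)$.
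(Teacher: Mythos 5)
Your proof is correct and follows essentially the same route as the paper's: nonemptiness of $V_{Z}$ via whole-set inclusion, pointwise intersection of choice functions using $\kappa$-completeness for downward directedness, and for the two homomorphism conditions the same witnesses $f^{-1}\left[v_{2}(f(x))\right]$ and $f\left[v_{1}(x)\right]$, the latter via upward closure combined with the homomorphism biconditional. Your only deviations are points of extra care the paper leaves implicit — handling $S'=\emptyset$ separately and spelling out that a witnessing choice function may be padded with $C$ at all other points — which is exactly what justifies the paper's phrase ``by definition of $V_{Z_{1}}$, there exists $u$''.
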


\begin{proof}
We first show that $H(Z)$ is an object of $\mkf_{\kappa}$. 
Since $Z$ includes the whole set,  $V_{Z}\not=\emptyset$. 
Therefore, $S_{Z}\not=\emptyset$. 
Take any subset $\{R_{v_{i}}\mid i\in\kappa\}$ of $S_{Z}$. As 
$Z$ is $\kappa$-complete, 
there exists $u\in V_{Z}$ such that 
$$
u(x)=\bigcap_{i\in\kappa}v_{i}(x)\in\nsystem(x)
$$
for any $x\in C$. 
Then $R_{u}=\bigcap_{i\in\kappa}R_{v_{i}}$. 
Next, we show that  
$$
\hom_{\nfr_{\kappa}}(Z_{1}, Z_{2})\subseteq 
\hom_{\mkf_{\kappa}}(H(Z_{1}), H(Z_{2}))
$$ 
for any $\kappa$-complete neighborhood frames 
$Z_{1}=\langle C_{1},\nsystem_{1}\rangle$ and $Z_{2}=\langle C_{2},\nsystem_{2}\rangle$.
Suppose $f\in \hom_{\nfr_{\kappa}}(Z_{1}, Z_{2})$. First, take any $x\in C_{1}$ and 
$R_{v}\in S_{Z_{2}}$. Then $\urel{R_{v}}f(x)=v(f(x))\in\nsystem_{2}(f(x))$. 
Hence, $f^{-1}\left[\urel{R_{v}}f(x)\right]\in \nsystem_{1}(x)$. 
By definition of $V_{Z_{1}}$, there exists $u\in V_{Z_{1}}$ such that 
$$
\urel{R_{u}}x=
u(x)=f^{-1}\left[\urel{R_{v}}f(x)\right]. 
$$
Hence, for any $y\in C_{1}$, 
$$
x\rel{R_{u}}y\eq y\in u(x)\eq f(x)\rel{R_{v}} f(y).
$$
Next, 
take any $x\in C_{1}$ and 
$R_{u}\in S_{Z_{1}}$. Then $\urel{R_{u}}x=u(x)\in\nsystem_{1}(x)$. 
Since $Z_{1}$ is upward closed, 
$f^{-1}\left[f\left[\urel{R_{u}}x\right]\right]\in\nsystem_{1}(x)$. 
Therefore, $f\left[\urel{R_{u}}x\right]\in\nsystem_{2}(f(x))$. 
By definition of $V_{Z_{2}}$, there exists $v\in V_{Z_{2}}$ such that 
$$
\urel{R_{v}}f(x)=
v(f(x))=f\left[\urel{R_{u}}x\right]. 
$$
Hence, for any $z\in C_{2}$ such that
$
f(x)\rel{R_{v}}z
$, 
there exists $y\in C_{1}$ such that $x\rel{R_{u}}y$ and $f(y)=z$. 

\end{proof}

Now, we prove that $\mkf_{\kappa}$ and $\nfr_{\kappa}$ are equivalent, 
which is a generalization of 
Theorem~\ref{nfrkfr}:

\begin{theorem}\label{mkfnfr}
$N$ and $H$ are equivalence between 
$\mkf_{\kappa}$ and $\nfr_{\kappa}$, for every cardinal number $\kappa$. 
\end{theorem}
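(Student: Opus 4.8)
The plan is to exhibit natural isomorphisms $\ident_{\nfr_{\kappa}}\cong N\circ H$ and $\ident_{\mkf_{\kappa}}\cong H\circ N$; together with the functoriality already established in Lemmas~\ref{mkftonfr} and~\ref{nfrtomkf}, these make $N$ and $H$ an equivalence of categories.

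First I would show that $N\circ H$ is literally the identity functor on $\nfr_{\kappa}$. Since both $H$ and $N$ act as the identity on underlying maps, $N(H(f))=f$ for every arrow $f$, so it suffices to check $N(H(Z))=Z$ on objects. For $Z=\langle C,\nsystem\rangle$ the neighborhood system of $N(H(Z))$ at $x$ is $\ua\{\urel{R_{v}}x\mid v\in V_{Z}\}=\ua\{v(x)\mid v\in V_{Z}\}$. One inclusion is immediate from the definition of $V_{Z}$, since $v(x)\in\nsystem(x)$; for the reverse, given any $X\in\nsystem(x)$ I would define $v$ by $v(x)=X$ and $v(y)=C$ for $y\ne x$, which lies in $V_{Z}$ because $Z$ includes the whole set, and witnesses $X\in\{v(x)\mid v\in V_{Z}\}$. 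Hence $\{v(x)\mid v\in V_{Z}\}=\nsystem(x)$, and since $Z$ is upward closed $\ua\nsystem(x)=\nsystem(x)$. Thus $N\circ H=\ident_{\nfr_{\kappa}}$ on the nose, and the required natural isomorphism is the identity.

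The more substantial half is $H\circ N\cong\ident_{\mkf_{\kappa}}$. For $M=\langle W,S\rangle$ both $M$ and $H(N(M))$ have underlying set $W$, and I would claim that $\ident_{W}$ is an isomorphism $M\to H(N(M))$ in $\mkf_{\kappa}$. Write $H(N(M))=\langle W,S'\rangle$. The key translation is that membership in $\nsystem_{M}(x)=\ua\{\urel{R}x\mid R\in S\}$ means exactly that there is $R\in S$ with $\urel{R}x\subseteq v(x)$, while conversely each $\urel{R}x$ itself already lies in $\nsystem_{M}(x)$. Using this, checking the two homomorphism conditions for $\ident_{W}$ reduces to comparing the successor sets $\urel{R}x$ and $\urel{R_{v}}x=v(x)$: condition (1), given $R_{v}\in S'$ find $R\in S$ with $\urel{R}x\subseteq v(x)$, is exactly the membership $v(x)\in\nsystem_{M}(x)$; and condition (2), given $R\in S$ find $R_{v}\in S'$ with $v(x)\subseteq\urel{R}x$, is witnessed by the $v$ with $v(x)=\urel{R}x$ and $v(y)=W$ otherwise. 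The same two computations, read in the opposite direction, show that the inverse map $\ident_{W}:H(N(M))\to M$ is also a homomorphism, so $\ident_{W}$ is an isomorphism, and I set $\eta_{M}=\ident_{W}$.

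Finally, naturality of $\eta$ is automatic: for a homomorphism $f:M_{1}\to M_{2}$ we have $H(N(f))=f$, and $\eta_{M_{1}},\eta_{M_{2}}$ are identities, so the naturality square reads $f=f$. The main obstacle is not this bookkeeping but the verification in the previous paragraph that $\ident_{W}$ satisfies the back-and-forth conditions in both directions; everything there hinges on the equivalence between ``$v(x)\in\nsystem_{M}(x)$'' and ``$\urel{R}x\subseteq v(x)$ for some $R\in S$'', which is precisely the upward closure built into $\nsystem_{M}$. With the two natural isomorphisms in hand, $N$ and $H$ constitute an equivalence between $\mkf_{\kappa}$ and $\nfr_{\kappa}$.
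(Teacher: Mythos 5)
Your proposal is correct and takes essentially the same route as the paper: both composites are compared with the identity functor via the identity maps on underlying sets, and the back-and-forth conditions are reduced, exactly as in the paper, to the equivalence between $v(x)\in\nsystem_{M}(x)$ and $\urel{R}x\subseteq v(x)$ for some $R\in S$, together with the fact that every $X\in\nsystem(x)$ is realized as $v(x)$ for some $v\in V_{Z}$. Your only (harmless) refinements are the explicit witnesses $v(x)=X$ with $v(y)=C$ (resp.\ $v(y)=W$) elsewhere, which the paper leaves implicit, and the observation that $N\circ H$ is the identity on the nose rather than merely isomorphic to it.
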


\begin{proof}
For any object $M=\langle W,S\rangle$ of $\mkf_{\kappa}$, define a map 
$
\gamma_{M}:M\yy H(N(M))
$
by $\gamma_{M}(x)=x$ for any $x\in W$, and 
for any object $Z=\langle C,\nsystem\rangle$ of $\nfr_{\kappa}$, define a map 
$
\delta_{Z}:Z\yy H(N(Z))
$
by $\delta_{Z}(c)=c$ for any $c\in C$. 
It is trivial that 
$
H(N(f))\circ\gamma_{M_{1}}=\gamma_{M_{2}}\circ f
$
holds for any $f:M_{1}\yy M_{2}$
and 
$
N(H(g))\circ\delta_{Z_{1}}=\delta_{Z_{2}}\circ g
$
holds for any $g:Z_{1}\yy Z_{2}$. 

First, we show that for any multi-relational Kripke frame $M=\langle W,S\rangle$, 
$\gamma_{M}$ is an isomorphism of multi-relational Kripke frames from $M$ to $H(N(M))$.  
We check the first condition of the homomorphisms of multi-relational Kripke frames:
Take any $x\in W$ and $R_{v}\in S_{N(M)}$, where $v\in V_{N(M)}(x)$. 
Then there exists $R\in S$ such that 
$
\urel{R}x\subseteq v(x) 
$. 
For any $y\in W$, $R$ satisfies that 
\begin{equation*}\label{naturaliso}
x\rel{R}y
\thn
y\in v(x)
\eq
x\rel{R_{v}}y. 
\end{equation*}
Then we check the second condition: Take any $x\in W$ and $R\in S$. 
As $\urel{R}x\in\nsystem_{M}(x)$, there exists $v\in V_{N(M)}$ such that 
$\urel{R}x=v(x)$. 
Then $R_{v}\in S_{N(M)}$ satisfies that for any $y\in W$,
$$
x\rel{R_{v}}y
\eq
y\in v(x)
\eq
x\rel{R}y.
$$
As $\gamma_{M}$ is the identity mapping on $W$, 
$\gamma_{M}$ is an isomorphism of multi-relational Kripke frames.

Next, we prove that for any neighborhood frame $Z=\langle C,\nsystem\rangle$, 
$\delta_{Z}$ is an isomorphism of neighborhood frames from $Z$ to $N(H(Z))$.  
Since $Z$ is upward closed, 
\begin{equation}\label{upwardclosed}
\ua\{\urel{R_{v}}x\mid v\in V_{Z}\}
=
\{\urel{R}x\mid v\in V_{Z}\}
\end{equation}
for any $x\in C$. 
Take any $c\in C$ and $X\subseteq C$. Then, 
\begin{align*}
X\in\nsystem(x)
&\eq
\exists v\in V_{Z}\left(v(x)= X\right)\\
&\eq
\exists R_{v}\in S_{Z}\left(\urel{R_{v}}x= X\right)\\
&\eq
X\in\nsystem_{H(Z)}(x) &&(\text{by \ref{upwardclosed}}).
\end{align*}
As $\delta_{Z}$ is 
the identity mapping on $C$, 
$\delta_{Z}$ is an isomorphism of neighborhood frames. 
\end{proof}

By Theorem~\ref{camanfr} and Theorem~\ref{mkfnfr}, 
we have the following: 

\begin{theorem}
For any cardinal number $\kappa$, 
$\cama_{\kappa}$ and $\mkf_{\kappa}$ are dually equivalent. 
\end{theorem}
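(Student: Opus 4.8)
The plan is to obtain the stated duality by composing the two equivalences already in hand: the dual equivalence between $\cama_{\kappa}$ and $\nfr_{\kappa}$ from Theorem~\ref{camanfr}, and the covariant equivalence between $\nfr_{\kappa}$ and $\mkf_{\kappa}$ from Theorem~\ref{mkfnfr}. Recall that Theorem~\ref{camanfr} supplies contravariant functors $J:\cama_{\kappa}\yy\nfr_{\kappa}$ and $K:\nfr_{\kappa}\yy\cama_{\kappa}$ with natural isomorphisms $\ident_{\cama_{\kappa}}\cong K\circ J$ and $\ident_{\nfr_{\kappa}}\cong J\circ K$, while Theorem~\ref{mkfnfr} supplies covariant functors $N:\mkf_{\kappa}\yy\nfr_{\kappa}$ and $H:\nfr_{\kappa}\yy\mkf_{\kappa}$ with natural isomorphisms $\ident_{\mkf_{\kappa}}\cong H\circ N$ and $\ident_{\nfr_{\kappa}}\cong N\circ H$.

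First I would form the composite functors $H\circ J:\cama_{\kappa}\yy\mkf_{\kappa}$ and $K\circ N:\mkf_{\kappa}\yy\cama_{\kappa}$. Since $J$ and $K$ are contravariant while $N$ and $H$ are covariant, each composite reverses the direction of arrows exactly once and is therefore contravariant, so these are the natural candidates for a dual equivalence.

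Next I would verify the two required natural isomorphisms by whiskering. On $\cama_{\kappa}$ one computes
$$
(K\circ N)\circ(H\circ J)
=
K\circ(N\circ H)\circ J
\cong
K\circ \ident_{\nfr_{\kappa}}\circ J
=
K\circ J
\cong
\ident_{\cama_{\kappa}},
$$
where the first isomorphism is obtained by whiskering the isomorphism $\ident_{\nfr_{\kappa}}\cong N\circ H$ of Theorem~\ref{mkfnfr} by $K$ on the left and $J$ on the right, and the last isomorphism is that of Theorem~\ref{camanfr}. Symmetrically, on $\mkf_{\kappa}$,
$$
(H\circ J)\circ(K\circ N)
=
H\circ(J\circ K)\circ N
\cong
H\circ \ident_{\nfr_{\kappa}}\circ N
=
H\circ N
\cong
\ident_{\mkf_{\kappa}},
$$
using $\ident_{\nfr_{\kappa}}\cong J\circ K$ from Theorem~\ref{camanfr} and $\ident_{\mkf_{\kappa}}\cong H\circ N$ from Theorem~\ref{mkfnfr}.

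The only point requiring any care, and the nearest thing to an obstacle here, is confirming that whiskering a natural isomorphism by a functor again yields a natural isomorphism and that the resulting composite transformations are themselves natural; but this is a standard fact of the $2$-categorical calculus of functors and natural transformations and involves no computation specific to $\cama_{\kappa}$, $\nfr_{\kappa}$, or $\mkf_{\kappa}$. Hence $H\circ J$ and $K\circ N$ form a dual equivalence, and $\cama_{\kappa}$ and $\mkf_{\kappa}$ are dually equivalent for every cardinal number $\kappa$.
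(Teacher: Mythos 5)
Your proposal is correct and takes essentially the same route as the paper: the paper proves this theorem precisely by citing Theorem~\ref{camanfr} together with Theorem~\ref{mkfnfr}, i.e.\ by composing the dual equivalence $(J,K)$ with the equivalence $(N,H)$, leaving the whiskering argument implicit. Your explicit verification of the composite natural isomorphisms only spells out standard bookkeeping that the paper takes for granted.
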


By the same argument as Theorem~\ref{mkfnfr}, 
it follows that the category of all multi-relational Kripke 
frames are equivalent to the category of 
all upward closed neighborhood frames which includes the whole set. 
These categories are dually equivalent to the category of 
algebras which is obtained by weakening the definition of 
the modal operator in $\cama$ to the following; $\mdl 0=0$ and 
$\mdl x\leq \mdl y$ whenever $x\leq y$.

\section{Functor from $\cama_{\kappa}$ to $\mkf_{\kappa}$}

In the rest of the paper, we give another direct proof of duality 
between $\cama_{\kappa}$ and $\mkf_{\kappa}$
for every regular cardinal $\kappa$. 
First, 
we define a contravariant functor $F:\cama_{\kappa}\yy\mkf_{\kappa}$ 
for every regular cardinal $\kappa$. 
For any object $A$ of $\cama_{\kappa}$, 
a multi-relational Kripke frame 
$F(A)$ is defined by
$$
F(A)=\langle\atom(A),\{R(X)\mid X\subseteq A,\ |X|<\kappa\}\rangle,
$$
where, for any $a\in\atom(A)$ and $b\in\atom(B)$,
$$
a\rel{R(X)}b
\eq
a\leq\iand \mdl\left[\ua b\cap X\right],
$$ 
and 
for any arrow $f:A\yy B$ of $\cama_{\kappa}$, 
the mapping $F(f):\atom(B)\yy\atom(A)$ is defined by 
$$
F(f)(b)=\ladj{f}(b) 
$$
for any $b\in\atom(B)$. 
Below, we show that $F$ is a well-defined contravariant functor.

\begin{proposition}
Let $\kappa$ be a regular cardinal.  
If $A$ is a $\kappa$-additive complete atomic modal algebra, 
$F(A)$ is a $\kappa$-downward directed multi-relational Kripke frame. 
\end{proposition}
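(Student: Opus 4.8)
The plan is to verify the $\kappa$-downward directedness condition of Definition~\ref{def:mkf} directly, the key point being that regularity of $\kappa$ keeps the parameter set $\{X\subseteq A\mid |X|<\kappa\}$ closed under the unions needed to intersect relations. Write $S=\{R(X)\mid X\subseteq A,\ |X|<\kappa\}$ for the relation set of $F(A)$, and first unwind the defining clause: since $a\leq\iand Z$ holds iff $a\leq z$ for every $z\in Z$, for atoms $a,b$ we have
$$
a\rel{R(X)}b
\eq
a\leq\mdl y\ \text{for every}\ y\in X\ \text{with}\ b\leq y.
$$

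The first step is an easy monotonicity observation in the parameter $X$: if $X\subseteq X'$, then the constraints defining $a\rel{R(X')}b$ include all those defining $a\rel{R(X)}b$, so $R(X')\subseteq R(X)$. With this in hand, take any $S'\subseteq S$ with $|S'|<\kappa$ and write $S'=\{R(X_{i})\mid i\in I\}$; put $X=\bigcup_{i\in I}X_{i}$. Since $X_{i}\subseteq X$ for each $i$, monotonicity gives $R(X)\subseteq R(X_{i})$, whence $R(X)\subseteq\bigcap S'$, which is exactly the inclusion (\ref{dd}) required by the definition.

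It remains only to check that $R(X)$ actually belongs to $S$, i.e.\ that $|X|<\kappa$, and this is where I expect the one substantive point of the argument to lie. Here $X$ is a union of $|I|<\kappa$ many sets $X_{i}$, each of cardinality $<\kappa$; because $\kappa$ is regular it is closed under such unions, so $|X|<\kappa$ and hence $R(X)\in S$. Finally $S\neq\emptyset$, since $\emptyset$ satisfies $|\emptyset|<\kappa$ and so $R(\emptyset)\in S$, confirming that $F(A)=\langle\atom(A),S\rangle$ is a genuine multi-relational Kripke frame; combined with the displayed inclusion this shows $F(A)$ is $\kappa$-downward directed.
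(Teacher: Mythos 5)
Your proof is correct and takes essentially the same route as the paper: form $X=\bigcup_{i\in I}X_{i}$, invoke regularity of $\kappa$ to get $|X|<\kappa$, and conclude from the inclusion $R\left(\bigcup_{i\in I}X_{i}\right)\subseteq\bigcap_{i\in I}R(X_{i})$, which the paper asserts without proof and you justify by the (correct) antitonicity of $X\mapsto R(X)$. Your extra checks (unwinding the defining clause, $S\neq\emptyset$ via $R(\emptyset)$) are just details the paper dismisses as clear.
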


\begin{proof}
It is clear that $F(A)$ is a multi-relational Kripke frame. 
We show that $F(A)$ is $\kappa$-downward directed. 
Suppose $X_{i}\subseteq A$ and $|X_{i}|<\kappa$ for any $i\in I$.  
If $|I|<\kappa$,  then
$$
|\bigcup_{i\in I}X_{i}|<\kappa,
$$
since $\kappa$ is regular. 
Hence, $F(A)$ is $\kappa$-downward directed, because 
$$
R\left(\bigcup_{i\in I}X_{i}\right)\subseteq \bigcap_{i\in I} R(X_{i}). 
$$
\end{proof}

\begin{definition}\rm
Let $A$ be a $\kappa$-additive complete atomic modal algebra.    
For any $X\subseteq A$ and $a\in \atom(A)$,  
$p(X,a)$ denotes an element of $A$ defined by 
$$
p(X,a)=\ior\mdl^{-1}\left[\da(-a)\right]\cap X.   
$$
\end{definition}

\begin{lemma}\label{equivalence}
Let $A$ be a $\kappa$-additive complete atomic modal algebra,   
$X$ a subset of $A$ such that $|X|<\kappa$, and $a\in\atom(A)$. 
Then 
for any $a'\in\atom(A)$, 
$$
a\rel{R(X)}a'
\eq
a'\not\leq p(X,a). 
$$
\end{lemma}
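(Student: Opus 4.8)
The plan is to rewrite both sides of the claimed equivalence as a single quantified statement ranging over the elements of $X$ that lie above $a'$, and then to check that the two statements coincide. Throughout, the only structural facts I need are that meets and joins exist (completeness) and that atoms are completely join-irreducible and $\leq$-total in the sense of Proposition~\ref{atom-property}.

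First I would unfold the left-hand side. By the definition of $R(X)$, the relation $a\rel{R(X)}a'$ means $a\leq\iand\mdl\left[\ua a'\cap X\right]$, that is, $a\leq\iand\{\mdl x\mid x\in X,\ a'\leq x\}$. Since an element lies below a meet exactly when it lies below each factor, this is equivalent to the condition that $a\leq\mdl x$ for every $x\in X$ with $a'\leq x$ (with the meet over the empty family equal to $1$, so the condition holds vacuously when no such $x$ exists).

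Next I would unfold the right-hand side. Unravelling the definition, $p(X,a)=\ior\left(\mdl^{-1}\left[\da(-a)\right]\cap X\right)=\ior\{x\in X\mid\mdl x\leq -a\}$. Because $a'$ is an atom, the complete join-irreducibility in Proposition~\ref{atom-property}(\ref{con:compjoinirr}) gives $a'\leq p(X,a)$ iff $a'\leq x$ for some $x\in X$ with $\mdl x\leq -a$; negating, $a'\not\leq p(X,a)$ iff $\mdl x\not\leq -a$ for every $x\in X$ with $a'\leq x$.

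The two unfolded conditions match once one knows, for the atom $a$, that $a\leq\mdl x\eq\mdl x\not\leq -a$. This single bridging equivalence is the only place atomicity of $a$ enters, and I expect it to be the crux (though a short one): if $a\leq\mdl x$ and $\mdl x\leq -a$ then $a\leq -a$, forcing $a=0$, a contradiction; conversely, Proposition~\ref{atom-property}(\ref{con:complete}) gives $a\leq\mdl x$ or $a\leq -\mdl x$, and the second alternative is precisely $\mdl x\leq -a$. Substituting $a\leq\mdl x$ for $\mdl x\not\leq -a$ turns the unfolded right-hand side into the unfolded left-hand side, completing the equivalence. No serious obstacle remains beyond keeping the quantifier alternation straight; note in particular that $\kappa$-additivity is not used here, only completeness and atomicity.
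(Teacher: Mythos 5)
Your proof is correct and follows essentially the same route as the paper: the paper's own proof is a chain of equivalences that unfolds $a\rel{R(X)}a'$ to $\forall x\in X(a'\leq x\thn a\leq\mdl x)$, contraposes, uses the atom dichotomy (Proposition~\ref{atom-property}(\ref{con:complete})) to replace $a\not\leq\mdl x$ by $a\leq-\mdl x$, i.e.\ $\mdl x\leq-a$, and then invokes complete join-irreducibility of $a'$ (Proposition~\ref{atom-property}(\ref{con:compjoinirr})) to identify the resulting condition with $a'\not\leq\ior\mdl^{-1}\left[\da(-a)\right]\cap X=p(X,a)$. Your bridging equivalence $a\leq\mdl x\eq\mdl x\not\leq-a$ is precisely the paper's middle steps, and your observation that only completeness and the atom properties (not $\kappa$-additivity) are used is accurate.
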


\begin{proof}
For any $a'\in\atom(A)$, 
\begin{align*}
a\rel{R(X)}a'
&\eq
a\leq \iand\mdl\left[\ua a'\cap X\right]\\
&\eq
\forall x\in X
(
a'\leq x\thn a\leq \mdl x
)\\
&\eq
\forall x\in X
(
a\not\leq \mdl x
\thn
a'\not\leq x
)\\
&\eq
\forall x\in X
(
a\leq -\mdl x
\thn
a'\not\leq x) 
&& \text{($a\in\atom(A)$)}
\\
&\eq
\forall x\in X
(
\mdl x\leq -a
\thn
a'\not\leq x
)\\
&\eq
\forall x
\left(
x\in\mdl^{-1}\left[\da(-a)\right]\cap X
\thn
a'\not\leq x
\right)\\
&\eq
a'
\not\leq
\ior\mdl^{-1}\left[\da(-a)\right]\cap X && \text{($a'\in\atom(A)$)}
.
\end{align*}
\end{proof}

\begin{lemma}\label{inv-equiv}
Let
$A$ and $B$ be $\kappa$-additive complete atomic modal algebras, 
$f:A\yy B$ a homomorphism of complete modal algebras,  
$Y\subseteq B$ such that $|Y|<\kappa$, and $b\in\atom(B)$.  
Suppose $X=\{\radj{f}(p(Y,b))\}$. 
Then for any $a\in\atom(A)$, 
$$
\ladj{f}(b)\rel{R(X)}a
\eq
a\not\leq\radj{f}(p(Y,b)). 
$$
\end{lemma}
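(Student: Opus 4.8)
The plan is to reduce the statement to Lemma~\ref{equivalence}, applied inside $A$ to the singleton $X=\{\radj{f}(p(Y,b))\}$. First I would note that $\ladj{f}(b)$ is itself an atom of $A$ (by the proposition that $\ladj{f}$ carries atoms to atoms) and that $|X|=1<\kappa$, so Lemma~\ref{equivalence} applies with $\ladj{f}(b)$ in the role of the fixed atom, giving
$$
\ladj{f}(b)\rel{R(X)}a\eq a\not\leq p(X,\ladj{f}(b))
$$
for every $a\in\atom(A)$. Comparing this with the desired equivalence, it suffices to prove the single identity $p(X,\ladj{f}(b))=\radj{f}(p(Y,b))$.

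To unwind the left-hand side, write $z=\radj{f}(p(Y,b))$. Since $X=\{z\}$, the set $\mdl^{-1}[\da(-\ladj{f}(b))]\cap X$ is contained in $\{z\}$, so $p(X,\ladj{f}(b))$ equals $z$ if $\mdl z\leq-\ladj{f}(b)$ and equals $0$ otherwise; because $\ladj{f}(b)$ is an atom, the condition $\mdl z\leq-\ladj{f}(b)$ is (by Proposition~\ref{atom-property}) equivalent to $\ladj{f}(b)\not\leq\mdl z$. Hence the whole lemma comes down to the key claim
$$
\ladj{f}(b)\not\leq\mdl\radj{f}(p(Y,b)),
$$
which in turn yields $\mdl z\leq-\ladj{f}(b)$ and so $p(X,\ladj{f}(b))=z=\radj{f}(p(Y,b))$.

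The step I expect to be the main obstacle is precisely this key claim, and I would prove it by transporting it to $B$ through the adjunction. By the right adjoint equivalence in (\ref{adjoint}), $\ladj{f}(b)\leq\mdl\radj{f}(p(Y,b))$ holds if and only if $b\leq f(\mdl\radj{f}(p(Y,b)))$. Since $f$ is a homomorphism of modal algebras and $f\circ\radj{f}\leq\ident_{B}$ by (\ref{monotone}), together with monotonicity of $\mdl$, I get $f(\mdl\radj{f}(p(Y,b)))=\mdl f(\radj{f}(p(Y,b)))\leq\mdl p(Y,b)$. The crucial use of the hypotheses then enters: $p(Y,b)$ is the join of the fewer-than-$\kappa$ elements $y\in Y$ satisfying $\mdl y\leq-b$, so the $\kappa$-additivity of $B$ (equation (\ref{eqbarcan})) gives $\mdl p(Y,b)=\ior_{y}\mdl y\leq-b$. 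Therefore $f(\mdl\radj{f}(p(Y,b)))\leq-b$, and $b\leq f(\mdl\radj{f}(p(Y,b)))$ would force $b\leq-b$, i.e. $b=0$, contradicting $b\in\atom(B)$. This establishes the key claim, completing the reduction.
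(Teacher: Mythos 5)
Your proposal is correct and follows essentially the same route as the paper's proof: reduce to Lemma~\ref{equivalence} by establishing the identity $p(X,\ladj{f}(b))=\radj{f}(p(Y,b))$, which comes down to $\mdl\radj{f}(p(Y,b))\leq-\ladj{f}(b)$, proved exactly as in the paper via $f\circ\radj{f}\leq\ident_{B}$ from (\ref{monotone}), the $\kappa$-additivity of $B$ giving $\mdl p(Y,b)\leq-b$, and the adjunction (\ref{adjoint}). The only cosmetic differences are that you phrase the final transfer contrapositively (using Proposition~\ref{atom-property} and a contradiction with $b\neq 0$) where the paper pushes the complement through $f$ directly, and that the equivalence you invoke, $b\leq f(a)\eq\ladj{f}(b)\leq a$, is the left-adjoint half of (\ref{adjoint}) rather than the right-adjoint one.
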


\begin{proof}
By Lemma \ref{equivalence}, all we have to prove is
$$
\radj{f}(p(Y,b))=p(X,\ladj{f}(b)).    
$$
As 
$$
p(X,\ladj{f}(b))    
=
\ior\mdl^{-1}\left[\da(-\ladj{f}(b))\right]\cap \{\radj{f}(p(Y,b))\},    
$$  
it is enough to show
$$
\radj{f}(p(Y,b))\in \mdl^{-1}\left[\da\left(-\ladj{f}(b)\right)\right]. 
$$
Since $B$ is $\kappa$-additive
\begin{align*}
\mdl f(\radj{f}(p(Y,b)))
&\leq
\mdl p(Y,b)
&& \text{(by (\ref{monotone}))}\\
&=
\mdl\ior\mdl^{-1}\left[\da(-b)\right]\cap Y  \\
&=
\ior\mdl\left(\mdl^{-1}\left[\da(-b)\right]\cap Y\right)  &&
\text{($\kappa$-additivity)}\\
&\leq 
\ior\da(-b)\\
&=
-b. 
\end{align*}
Hence
$$
b\leq-\mdl f(\radj{f}(p(Y,b)))=f(-\mdl\radj{f}(p(Y,b))).   
$$
By (\ref{adjoint}),
$$
\ladj{f}(b)\leq-\mdl\radj{f}(p(Y,b)),  
$$
so
\begin{equation*}\label{property}
\mdl\radj{f}(p(Y,b))\leq-\ladj{f}(b). 
\end{equation*}
Hence, 
$$
\radj{f}(p(Y,b))\in \mdl^{-1}\left[\da\left(-\ladj{f}(b)\right)\right]. 
$$
\end{proof}

\begin{proposition}
Let $\kappa$ be a regular cardinal. 
For any $\kappa$-additive complete atomic modal algebras $A$ and $B$ and 
for any homomorphism $f:A\yy B$  of complete modal algebras, 
$F(f):\atom(B)\yy\atom(A)$ 
is a homomorphism of multi-relational Kripke frames from $F(B)$ to $F(A)$. 
\end{proposition}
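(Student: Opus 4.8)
The plan is to check the two clauses of Definition~\ref{def:mkf} directly for $\ladj{f}\colon\atom(B)\yy\atom(A)$, regarded (since $F$ is contravariant) as a map from $F(B)=\langle\atom(B),\{R(Y)\mid Y\subseteq B,\ |Y|<\kappa\}\rangle$ to $F(A)=\langle\atom(A),\{R(X)\mid X\subseteq A,\ |X|<\kappa\}\rangle$. This map sends atoms to atoms by the earlier proposition asserting $\ladj{f}(b)\in\atom(A)$ for $b\in\atom(B)$, so it is at least well defined on the underlying sets. The two facts I will lean on throughout are the adjunction~(\ref{adjoint}), in the forms $\ladj{f}(b')\leq x\eq b'\leq f(x)$ and $f(a)\leq c\eq a\leq\radj{f}(c)$, and the identity $f(\mdl x)=\mdl f(x)$ coming from $f$ being a homomorphism of modal algebras.

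For clause~(1) (the forth condition), I am given $b\in\atom(B)$ and a relation $R(X)$ of $F(A)$ and must produce a relation of $F(B)$; the natural choice is $R(f[X])$, legitimate because $|f[X]|\leq|X|<\kappa$. Unwinding the definition, $b\rel{R(f[X])}b'$ asserts $b\leq\mdl f(x)$ for every $x\in X$ with $b'\leq f(x)$, whereas $\ladj{f}(b)\rel{R(X)}\ladj{f}(b')$ asserts $\ladj{f}(b)\leq\mdl x$ for every $x\in X$ with $\ladj{f}(b')\leq x$. Given the former, I fix such an $x$: by the adjunction $\ladj{f}(b')\leq x$ is $b'\leq f(x)$, so the hypothesis gives $b\leq\mdl f(x)=f(\mdl x)$, and the adjunction applied once more yields $\ladj{f}(b)\leq\mdl x$; taking the meet over all relevant $x$ completes this clause.

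For clause~(2) (the back condition), I am given $b\in\atom(B)$ and a relation $R(Y)$ of $F(B)$. The decisive move is to take $X=\{\radj{f}(p(Y,b))\}$, the singleton that makes Lemma~\ref{inv-equiv} applicable. Assuming $\ladj{f}(b)\rel{R(X)}a$, Lemma~\ref{inv-equiv} gives $a\not\leq\radj{f}(p(Y,b))$, which by the adjunction is $f(a)\not\leq p(Y,b)$, i.e.\ $f(a)\land-p(Y,b)\neq0$. Atomicity of $B$ then yields an atom $b'\leq f(a)\land-p(Y,b)$. From $b'\leq f(a)$ the adjunction gives $\ladj{f}(b')\leq a$, and since both are atoms this forces $\ladj{f}(b')=a$; from $b'\leq-p(Y,b)$ and $b'\neq0$ I get $b'\not\leq p(Y,b)$, whence $b\rel{R(Y)}b'$ by Lemma~\ref{equivalence} applied to $B$. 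Thus $b'$ is the required witness.

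The forth condition is essentially bookkeeping with the adjunction and the modal homomorphism property, so I expect no difficulty there. The real content is the back condition, and the only genuinely non-obvious step is the choice of the singleton relation $X=\{\radj{f}(p(Y,b))\}$: once Lemma~\ref{inv-equiv} is available this relation exactly captures the image of $R(Y)$, and the production of the preimage atom $b'$ is then a one-line atomicity argument. I therefore view setting up the two preparatory lemmas (already done) as the substantive part, with the present proposition being their harvest.
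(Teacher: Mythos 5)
Your proof is correct and follows essentially the same route as the paper: the forth condition via the witness $R(f[X])$ and the back condition via the singleton $X=\{\radj{f}(p(Y,b))\}$ together with Lemma~\ref{inv-equiv}, atomicity of $B$, and Lemma~\ref{equivalence}, with atoms forcing $\ladj{f}(b')=a$. Your clause~(1) is merely a streamlined version of the paper's computation (which establishes the same implication via the explicit meets in~(\ref{eq:assump}) and~(\ref{eq:member})), replacing them with direct appeals to the adjunction~(\ref{adjoint}) and $f(\mdl x)=\mdl f(x)$.
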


\begin{proof}
Condition 1 of Definition \ref{def:mkf}: 
Take any $b_{1}\in\atom(B)$ and any $X\subseteq A$ such that $|X|<\kappa$. 
Then $|f\left[X\right]|<\kappa$. Take any $b_{2}\in\atom(B)$. 
We show that 
$$
b_{1}\rel{R(f\left[X\right])}b_{2}
\thn
\ladj{f}(b_{1})\rel{R(X)}\ladj{f}(b_{2}).  
$$
Suppose $b_{1}\rel{R(f\left[X\right])}b_{2}$. 
By definition of $R(f\left[X\right])$, 
\begin{equation*}
b_{1}
\leq
\iand\mdl\left[\ua b_{2}\cap f\left[X\right]\right]. 
\end{equation*}
Therefore, 
\begin{equation}\label{eq:assump}
\ladj{f}(b_{1})
=
\iand_{x\in A,\ b_{1}\leq f(x)}x
\leq
\iand
\left\{
x\in A\mid \iand\mdl\left[\ua b_{2}\cap f\left[X\right]\right]\leq f(x)
\right\}. 
\end{equation}
On the other hand, 
\begin{equation}\label{eq:member}
\mdl\left[\ua\ladj{f}(b_{2})\cap X\right]
\subseteq
\left\{
x\in A\mid \iand\mdl\left[\ua b_{2}\cap f\left[X\right]\right]\leq f(x)
\right\}, 
\end{equation}
because, for any $z\in\mdl\left[\ua \ladj{f}(b_{2})\cap X\right]$, there exists 
$u\in X$ such that 
$$
\ladj{f}(b_{2})\leq u,\ \mdl u=z, 
$$
and, this implies $b_{2}\leq f(u)$ and $f(u)\in f\left[X\right]$, and therefore, 
$$
\iand\mdl\left[\ua b_{2}\cap f\left[X\right]\right]
\leq
\mdl f(u)
=
f(\mdl u)
=
f(z).
$$
By (\ref{eq:assump}) and (\ref{eq:member}), 
$$
\ladj{f}(b_{1})\leq \iand\mdl\left[\ua \ladj{f}(b_{2})\cap X\right]. 
$$
Hence, 
$$
\ladj{f}(b_{2})
\rel{R(X)}
\ladj{f}(b_{1}).
$$

\noindent
Condition 2 of Definition \ref{def:mkf}: 
Take any $b\in\atom(B)$ and any $Y\subseteq B$ such that 
$|Y|<\kappa$. 
Define $X\subseteq A$ by 
$$
X=\{\radj{f}(p(Y,b))\}. 
$$
Suppose $a\in\atom(A)$ and $\ladj{f}(b)\rel{R(X)} a$. 
Then 
$a\not\leq \radj{f}(p(Y,b))$
by Lemma \ref{inv-equiv}.  Hence,  $f(a)\not\leq p(Y,b)$. 
Since $B$ is atomic, there exists $b'\in\atom(B)$  such that 
$$
b'\leq f(a),\ \  b'\not\leq p(Y,b).
$$
Then $\ladj{f}(b')\leq a$, and  $b\rel{R(Y)}b'$ by  Lemma \ref{equivalence}. 
Since $\ladj{f}(b')$ and $a$ are in $\atom(A)$, 
$\ladj{f}(b')= a$.
\end{proof}

\section{Functor from $\mkf_{\kappa}$ to $\cama_{\kappa}$}

We define a contravariant functor $G:\mkf_{\kappa}\yy\cama_{\kappa}$ for 
every cardinal number $\kappa$. 
For any object $M=\langle W, S\rangle$ of $\mkf_{\kappa}$, 
a complete atomic modal algebra $G(M)$ is defined by 
$$
G(M)=
\langle
\power(W);\cup,\cap,W\setminus-,\mdl_{M},\emptyset,W
\rangle, 
$$
where $\mdl_{M}$ is defined by 
$$
\mdl_{M}X=\bigcap_{R\in S}\drel{R}X  
$$
for any $X\subseteq W$, 
and for any multi-relational Kripke frames 
$M_{1}=\langle W_{1},S_{1}\rangle$, 
$M_{2}=\langle W_{2},S_{2}\rangle$, and 
any arrow $g:M_{1}\yy M_{2}$ of $\mkf_{\kappa}$, 
the mapping $G(g):\power(W_{2})\yy\power(W_{1})$ is defined by 
$$
G(g)(X)=g^{-1}\left[X\right]
$$
for any $X\subseteq W_{2}$. 
Below, we show that $G$ is a well-defined contravariant functor.

\begin{proposition}
Let $\kappa$ be a cardinal number. 
If $M=\langle W, S\rangle$ is a 
$\kappa$-downward directed multi-relational 
Kripke frame, $G(g)(M)$ is a $\kappa$-additive 
complete atomic modal algebra. 
\end{proposition}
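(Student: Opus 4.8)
The plan is to notice that the Boolean reduct of $G(M)$ is simply the full power set algebra $\langle\power(W);\cup,\cap,W\setminus-,\emptyset,W\rangle$, which is automatically complete and atomic, its atoms being the singletons $\{w\}$ for $w\in W$. Consequently completeness and atomicity require no argument, and the whole content of the proposition is that $\mdl_{M}$ makes this a modal algebra which is moreover $\kappa$-additive. First I would record the two cheap facts about $\mdl_{M}$. One is that $\mdl_{M}\emptyset=\emptyset$: this uses that $S\not=\emptyset$ together with $\drel{R}\emptyset=\emptyset$ for each $R\in S$, so that $\mdl_{M}\emptyset=\bigcap_{R\in S}\emptyset=\emptyset$ (here nonemptiness of $S$ is essential, since $\bigcap_{R\in\emptyset}$ would be $W$). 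The other is that $\mdl_{M}$ is monotone, since each $\drel{R}$ is monotone and an intersection of monotone maps is monotone; this already yields one of the two inclusions in the additivity law.

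For $\kappa$-additivity, fix $\mathcal{X}\subseteq\power(W)$ with $|\mathcal{X}|<\kappa$ and put $U=\bigcup\mathcal{X}$. The inclusion $\bigcup_{X\in\mathcal{X}}\mdl_{M}X\subseteq\mdl_{M}U$ is immediate from monotonicity, so the work is the reverse inclusion $\mdl_{M}U\subseteq\bigcup_{X\in\mathcal{X}}\mdl_{M}X$, which I would prove by contradiction. Suppose $w\in\mdl_{M}U$ but $w\notin\mdl_{M}X$ for every $X\in\mathcal{X}$. Then for each $X$ there is a witnessing relation $R_{X}\in S$ with $w\notin\drel{R_{X}}X$, i.e. no $x\in X$ satisfies $w\rel{R_{X}}x$. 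The family $S'=\{R_{X}\mid X\in\mathcal{X}\}$ has $|S'|\leq|\mathcal{X}|<\kappa$, so by $\kappa$-downward directedness there is $R^{\ast}\in S$ with $R^{\ast}\subseteq\bigcap S'$. Since $w\in\mdl_{M}U\subseteq\drel{R^{\ast}}U$, some $u\in U$ satisfies $w\rel{R^{\ast}}u$; choosing $X_{0}\in\mathcal{X}$ with $u\in X_{0}$ and using $R^{\ast}\subseteq R_{X_{0}}$ gives $w\rel{R_{X_{0}}}u$ with $u\in X_{0}$, contradicting the choice of $R_{X_{0}}$. This establishes $\kappa$-additivity, and the instance $\mathcal{X}=\{X,Y\}$ (available whenever $2<\kappa$, i.e. throughout the relevant infinite range of $\kappa$) gives $\mdl_{M}(X\cup Y)=\mdl_{M}X\cup\mdl_{M}Y$, so that $G(M)$ really is a modal algebra.

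The main obstacle is exactly this reverse inclusion, and the only genuinely nonformal step inside it is the appeal to $\kappa$-downward directedness to collapse the whole family $\{R_{X}\mid X\in\mathcal{X}\}$ of witnessing relations into a single relation $R^{\ast}$ lying beneath all of them. This is precisely where the hypothesis on $M$ is used, and it is indispensable: without downward directedness $\mdl_{M}$ is only monotone rather than additive, as the closing remark of the previous section already indicates. Everything else — completeness, atomicity, $\mdl_{M}\emptyset=\emptyset$, monotonicity, and the forward inclusion — is routine.
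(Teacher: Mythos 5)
Your proof is correct and takes essentially the same approach as the paper's: monotonicity of $\mdl_{M}$ gives one inclusion, and the reverse inclusion is obtained by picking a witnessing relation $R_{X}\in S$ for each member of the family and collapsing these witnesses into a single relation via $\kappa$-downward directedness (the paper phrases this step contrapositively where you argue by contradiction, a purely cosmetic difference). Your explicit observation that the instance $\mathcal{X}=\{X,Y\}$ supplies the finite additivity needed for $G(M)$ to be a modal algebra at all is a point the paper leaves implicit, and is a welcome extra care.
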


\begin{proof}
It is clear that 
$\langle
\power(W);\cup,\cap,W\setminus-,\emptyset
\rangle$
is a complete atomic Boolean algebra. 
Since $\drel{R}\emptyset=\emptyset$ for any $R\in S$, 
$$
\mdl_{M}\emptyset=\bigcap_{R\in S}\drel{R}\emptyset=\emptyset. 
$$
Let $\{X_{i}\}_{i\in I}$ be a subset of $\power(W)$ such that $|I|<\kappa$. 
Since $\mdl_{M}$ is order preserving, 
$$
\bigcup_{i\in I}\mdl_{M}X_{i}\subseteq \mdl_{M}\bigcup_{i\in I}X_{i}.  
$$
We show the converse. 
For any $w\in W$, 
\begin{align*}
w\not\in \bigcup_{i\in I}\mdl_{M}X_{i}
&\eq
w\not\in
\bigcup_{i\in I}\bigcap_{R\in S}\drel{R}X_{i}\\
&\eq
\forall{i\in I}\left(
w\not\in
\bigcap_{R\in S}\drel{R}X_{i}
\right)\\
&\eq
\forall{i\in I}\exists{R_{i}\in S}
\forall x\in X_{i}
\left(
w\not\rel{R_{i}} x
\right). 
\end{align*}
Since $M$ is $\kappa$-downward directed, there exists $Q\in S$ such that 
$$
Q\subseteq \bigcap_{i\in I} R_{i}. 
$$
Then 
$$
\forall{i\in I}
\forall x\in X_{i}
\left(
w\not\rel{Q} x
\right).   
$$
Thus, 
$$
w\not\in
\drel{Q}
\bigcup_{i\in I}
X_{i}. 
$$
Hence, 
$$
w\not\in
\bigcap_{R\in S}
\drel{R}
\bigcup_{i\in I}
X_{i}
=
\mdl_{M} \bigcup_{i\in I}
X_{i}.
$$
\end{proof}

\begin{proposition}\label{garrow}
Let $\kappa$ be a cardinal number. 
For any $\kappa$-downward directed multi-relational 
Kripke frames $M_{1}=\langle W_{1},S_{1}\rangle$, 
$M_{2}=\langle W_{2},S_{2}\rangle$
and 
a homomorphism $g:M_{1}\yy M_{2}$  of multi-relational 
Kripke frames, 
$G(g):\power(W_{2})\yy\power(W_{1})$ 
is a homomorphism of complete modal algebras from $G(M_{1})$ to $G(M_{2})$. 
\end{proposition}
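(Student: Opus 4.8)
The plan is to verify the two components of being a homomorphism of complete modal algebras separately: that $G(g)(X)=g^{-1}\left[X\right]$ respects the complete Boolean structure, and that it commutes with the modal operators. The Boolean part is routine and I would dispatch it in a sentence: taking preimages along a fixed map $g:W_{1}\yy W_{2}$ preserves arbitrary unions and intersections, relative complements, and both $\emptyset$ and the whole set, so $G(g)$ is at once a homomorphism of complete Boolean algebras between the reducts of $G(M_{2})$ and $G(M_{1})$ (note that, since $G$ is contravariant, the carrier map runs from $\power(W_{2})$ to $\power(W_{1})$). The real content is the modal equation, and that is where I would spend all the effort.

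Concretely, I would prove $g^{-1}\left[\mdl_{M_{2}}X\right]=\mdl_{M_{1}}(g^{-1}\left[X\right])$ for every $X\subseteq W_{2}$. Unfolding $\mdl_{M}Y=\bigcap_{R}\drel{R}Y$, membership in the two sides becomes a pair of quantified statements about witnesses: $w\in g^{-1}\left[\mdl_{M_{2}}X\right]$ says that for every $Q\in S_{2}$ there is some $x\in X$ with $g(w)\rel{Q}x$, while $w\in\mdl_{M_{1}}(g^{-1}\left[X\right])$ says that for every $R\in S_{1}$ there is some $y\in W_{1}$ with $w\rel{R}y$ and $g(y)\in X$. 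The whole proposition thus reduces to the equivalence of these two conditions, which I would establish by two symmetric arguments invoking the two clauses of the definition of a homomorphism of multi-relational Kripke frames.

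For the inclusion $\mdl_{M_{1}}(g^{-1}\left[X\right])\subseteq g^{-1}\left[\mdl_{M_{2}}X\right]$ I would fix an arbitrary $Q\in S_{2}$, use Condition~1 of Definition~\ref{def:mkf} at the point $w$ to obtain $R\in S_{1}$ with $w\rel{R}y\thn g(w)\rel{Q}g(y)$, feed this particular $R$ into the hypothesis to get a witness $y$ with $w\rel{R}y$ and $g(y)\in X$, and then take $x=g(y)$. For the reverse inclusion I would fix an arbitrary $R\in S_{1}$, use Condition~2 of Definition~\ref{def:mkf} at $w$ to obtain $Q\in S_{2}$ such that every $Q$-successor of $g(w)$ lifts along $g$ to an $R$-successor of $w$, feed this $Q$ into the hypothesis to get $x\in X$ with $g(w)\rel{Q}x$, and finally lift $x$ back to a witness $y$ with $w\rel{R}y$ and $g(y)=x\in X$.

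The main obstacle is simply getting the existential/universal alternation right: each direction must instantiate the homomorphism condition at the correct point and relation \emph{before} applying the witness hypothesis, and it is essential that Conditions~1 and~2 are stated pointwise, so that the matching relation $R$ or $Q$ is allowed to depend on $w$. Once this is arranged the argument is purely combinatorial. I would also remark that $\kappa$-downward directedness plays no role in this proposition; it is needed only to guarantee that $G(M)$ is an object of $\cama_{\kappa}$, whereas the arrow part goes through for arbitrary multi-relational Kripke frames.
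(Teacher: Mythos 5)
Your proposal is correct and takes essentially the same route as the paper's proof: the paper likewise dispatches the Boolean part in passing (``we only show'' the modal equation) and proves $\mdl_{M_{1}}G(g)(U)=G(g)(\mdl_{M_{2}}U)$ by exactly your two symmetric arguments, instantiating Condition~1 of Definition~\ref{def:mkf} at the point and relation in question before invoking the witness hypothesis for one inclusion, and Condition~2 for the other. Your closing observation that $\kappa$-downward directedness is not needed for the arrow part (only for showing $G(M)$ is $\kappa$-additive on objects) is also accurate and consistent with how the paper uses the hypothesis.
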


\begin{proof}
We only show that for any $U\subseteq W_{2}$, 
$$
\mdl_{M_{1}}G(g)(U)=G(g)(\mdl_{M_{2}}U).  
$$
All we have to prove is 
$$
\bigcap_{R\in S_{1}}\drel{R}
g^{-1}\left[U\right]
=
g^{-1}\left[\bigcap_{Q\in S_{2}}\drel{Q}U\right]. 
$$

\noindent
($\subseteq$):
Take any $x\in W_{1}$ and suppose 
$$
x\in 
\bigcap_{R\in S_{1}}\drel{R}
g^{-1}\left[U\right].
$$
Then
$$
\forall R\in S_{1}
\exists w_{R}\in g^{-1}\left[U\right](x\rel{R}w_{R}).
$$
Since $g$ is a homomorphism of multi-relational Kripke frames, 
for any $Q\in S_{2}$, 
there exists $R_{Q}\in S_{1}$ such 
that for any $y\in W_{1}$
\begin{equation*}
x\rel{R_{Q}}y\thn g(x)\rel{Q}g(y). 
\end{equation*}
Therefore, for any $Q\in S_{2}$, there exists $R_{Q}\in S_{1}$ and 
$w_{R_{Q}}\in g^{-1}\left[U\right]$
such that
$$
g(x)\rel{Q}g(w_{R_{Q}}). 
$$
Hence, 
$$
g(x)\in\drel{Q}U. 
$$
Since $Q$ is arbitrary, 
$$
g(x)\in\bigcap_{Q\in S_{2}}\drel{Q}U. 
$$
Hence, 
$$
x\in g^{-1}\left[\bigcap_{Q\in S_{2}}\drel{Q}U\right]. 
$$

\noindent
($\supseteq$):
Take any $x\in W_{1}$. Then 
\begin{align*}
x\in g^{-1}\left[\bigcap_{Q\in S_{2}}\drel{Q}U\right]
&\eq
g(x)\in\bigcap_{Q\in S_{2}}\drel{Q}U\\
&\eq
\forall Q\in S_{2}\exists u_{Q}\in U
\left(g(x)\rel{Q}u_{Q}\right). 
\end{align*}
Since $g$ is a homomorphism of multi-relational 
Kripke frames, for any $R\in S_{1}$, there exists $Q_{R}\in S_{2}$ such 
that for any $u\in W_{2}$
\begin{equation*}
g(x)\rel{Q_{R}}u \thn \mbox{$\exists y\in W_{1}$ such that 
$x\rel{R}y$
and
$g(y)=u$}. 
\end{equation*}
Therefore, for any $R\in S_{1}$, there exist $Q_{R}\in S_{1}$, 
$u_{Q_{R}}\in U$, and $y\in W_{1}$ such that 
$$
x\rel{R}y,\ \ 
g(y)=u_{Q_{R}}\in U. 
$$
Hence, 
$$
x\in \drel{R} g^{-1}\left[U\right].
$$
Since $R$ is arbitrary, 
$$
x\in \bigcap_{R\in S_{1}}\drel{R} g^{-1}\left[U\right].
$$
\end{proof}

\section{Duality between $\cama_{\kappa}$ and $\mkf_{\kappa}$}

In this section, we show that for any regular cardinal $\kappa$, 
$$
\ident_{\cama_{\kappa}}\cong G\circ F,\ \ 
\ident_{\mkf_{\kappa}}\cong F\circ G.  
$$

\begin{proposition}
Let $\kappa$ be a regular cardinal. 
For any object $A$ of $\cama_{\kappa}$, define 
a mapping $\tau_{A}:A\yy G(F(A))$ by 
$$
\tau_{A}(x)=\{a\in\atom(A)\mid a\leq x\}
$$
for any $x\in A$. 
Then $\tau$ is a natural transformation from 
$\ident_{\cama_{\kappa}}$ to $G\circ F$. 
\end{proposition}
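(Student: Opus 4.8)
The plan is to verify the two defining properties of a natural transformation: that each component $\tau_{A}$ is an arrow of $\cama_{\kappa}$ (a homomorphism of complete modal algebras from $A$ to $G(F(A))$), and that the naturality squares commute. The underlying Boolean part is essentially free: the map $\tau_{A}$ coincides with the map $\delta_{A}$ from the proof of Theorem~\ref{camanfr}, which is in particular an isomorphism of complete Boolean algebras onto $\power(\atom(A))$. So it remains only to check the modal identity $\tau_{A}(\mdl x)=\mdl_{F(A)}(\tau_{A}(x))$ for every $x\in A$, and then to check naturality.

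This modal identity is the heart of the argument and the step I expect to cause the most trouble. Unwinding the definitions of $G$ and $F$, an atom $a$ lies in $\mdl_{F(A)}(\tau_{A}(x))=\bigcap_{X}\drel{R(X)}\tau_{A}(x)$ precisely when, for every $X\subseteq A$ with $|X|<\kappa$, there is an atom $b\leq x$ with $a\rel{R(X)}b$. I would rewrite $a\rel{R(X)}b$ as $b\not\leq p(X,a)$ using Lemma~\ref{equivalence}, and then invoke atomicity of $A$ to collapse the clause ``there exists an atom $b\leq x$ with $b\not\leq p(X,a)$'' to the single inequality $x\not\leq p(X,a)$. The task thus reduces to proving
$$a\leq\mdl x \eq \forall X\subseteq A\,(|X|<\kappa\thn x\not\leq p(X,a)).$$

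For the forward direction I would argue by contradiction: if some $X$ with $|X|<\kappa$ satisfied $x\leq p(X,a)=\ior\{y\in X\mid \mdl y\leq -a\}$, then monotonicity of $\mdl$ together with $\kappa$-additivity (\ref{eqbarcan}) would give $\mdl x\leq\ior_{y}\mdl y\leq -a$, contradicting $a\leq\mdl x$ for the atom $a$. This is the only place where $\kappa$-additivity and the size bound $|X|<\kappa$ genuinely enter, and it is the subtle interplay between the universal quantifier over $X$ and the existential over atoms $b$ that makes this the crux. For the converse it suffices to test the single set $X=\{x\}$ (permissible since $1<\kappa$): the hypothesis $x\not\leq p(\{x\},a)$ forces $\mdl x\not\leq -a$, whence $a\leq\mdl x$ by Proposition~\ref{atom-property}(\ref{con:complete}).

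Finally, naturality is immediate from the adjunction. For an arrow $f:A\yy B$ of $\cama_{\kappa}$ I would evaluate both composites on an element $x$. Since $G(F(f))(Y)=\{b\in\atom(B)\mid\ladj{f}(b)\in Y\}$ and $\ladj{f}(b)$ is an atom of $A$, the left-hand composite $G(F(f))(\tau_{A}(x))$ equals $\{b\in\atom(B)\mid\ladj{f}(b)\leq x\}$, while $\tau_{B}(f(x))=\{b\in\atom(B)\mid b\leq f(x)\}$. These two sets coincide by the left-adjoint equivalence $b\leq f(x)\eq\ladj{f}(b)\leq x$ of (\ref{adjoint}). Hence $G(F(f))\circ\tau_{A}=\tau_{B}\circ f$, the square commutes, and $\tau$ is a natural transformation.
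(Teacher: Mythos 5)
Your proof is correct and takes essentially the same approach as the paper: your naturality computation (pulling back along $(\ladj{f})^{-1}$ and applying the adjunction $b\leq f(x)\eq\ladj{f}(b)\leq x$ from (\ref{adjoint})) is exactly the paper's proof of this proposition. The extra verification of the modal identity $\tau_{A}(\mdl x)=\mdl_{F(A)}\tau_{A}(x)$ — via Lemma~\ref{equivalence}, atomicity to reduce to $x\not\leq p(X,a)$, $\kappa$-additivity for the forward direction, and the singleton test $X=\{x\}$ for the converse — is not part of the paper's proof here but faithfully reproduces the argument the paper defers to Theorem~\ref{camatomkf}, so the only difference is the division of labor, not the mathematics.
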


\begin{proof}
Let $f:A\yy B$ be an arrow of $\cama_{\kappa}$. 
Then for any $x\in A$ and $b\in\atom(B)$, 
\begin{align*}
b\in G(F(f))\circ\tau_{A}(x)
&\eq
b\in (\ladj{f})^{-1}\left[\{a\in\atom(A)\mid a\leq x\}\right]\\
&\eq
\ladj{f}(b)\leq x\\
&\eq
b\leq f(x)\\
&\eq
b\in\tau_{B}\circ f(x).
\end{align*}
Hence, 
$$
G(F(f))\circ\tau_{A}=\tau_{B}\circ f. 
$$
\end{proof}

\begin{theorem}\label{camatomkf}
Let $\kappa$ be a regular cardinal. 
For any object $A$ of $\cama_{\kappa}$, 
$\tau_{A}:A\yy G(F(A))$ is an isomorphism of complete modal algebras. 
\end{theorem}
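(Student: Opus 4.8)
The plan is to handle the Boolean structure and the modal operator separately. Since $A$ is a complete atomic Boolean algebra, $\tau_A$ is nothing but the canonical representation of a complete atomic Boolean algebra, sending $x$ to $\{a\in\atom(A)\mid a\leq x\}$; it is a bijective homomorphism of complete Boolean algebras whose inverse is $Z\mapsto\ior Z$. Injectivity is precisely atomicity (as $x=\ior_{a\leq x}a$), surjectivity holds because every $Z\subseteq\atom(A)$ equals $\tau_A(\ior Z)$, and preservation of arbitrary joins and meets is routine. Hence the only genuine content is that $\tau_A$ intertwines the two modal operators, that is
\[
\tau_A(\mdl x)=\mdl_{F(A)}(\tau_A(x))\qquad\text{for every }x\in A.
\]

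First I would unfold the right-hand side. By the definitions of $\mdl_{F(A)}$ in $G(F(A))$ and of $\drel{}$,
\[
\mdl_{F(A)}(\tau_A(x))=\bigcap_{X\subseteq A,\ |X|<\kappa}\drel{R(X)}\tau_A(x),
\]
where $\drel{R(X)}\tau_A(x)=\{a\in\atom(A)\mid\exists b\in\atom(A)\,(b\leq x\text{ and }a\rel{R(X)}b)\}$. Rewriting $a\rel{R(X)}b$ as $b\not\leq p(X,a)$ by Lemma~\ref{equivalence}, and then using atomicity to replace the clause ``there is an atom $b\leq x$ with $b\not\leq p(X,a)$'' by the single inequality $x\not\leq p(X,a)$ (an atom below $x\land -p(X,a)$ exists iff $x\not\leq p(X,a)$), I obtain
\[
\drel{R(X)}\tau_A(x)=\{a\in\atom(A)\mid x\not\leq p(X,a)\},
\]
and therefore $\mdl_{F(A)}(\tau_A(x))=\{a\in\atom(A)\mid\forall X\,(|X|<\kappa\thn x\not\leq p(X,a))\}$. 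It then suffices to prove, for each atom $a$, the equivalence
\[
a\leq\mdl x\eq\forall X\,(|X|<\kappa\thn x\not\leq p(X,a)).
\]

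I would prove the forward direction contrapositively. Suppose some $X$ with $|X|<\kappa$ satisfies $x\leq p(X,a)$. Setting $X_0=\{y\in X\mid\mdl y\leq -a\}$, we have $p(X,a)=\ior X_0$ and $|X_0|\leq|X|<\kappa$, so monotonicity of $\mdl$ together with $\kappa$-additivity yields
\[
\mdl x\leq\mdl\ior X_0=\ior_{y\in X_0}\mdl y\leq -a,
\]
whence $a\not\leq\mdl x$. For the converse I would simply test the singleton $X=\{x\}$, of cardinality $1<\kappa$: if $a\not\leq\mdl x$, then since $a$ is an atom Proposition~\ref{atom-property} gives $a\leq -\mdl x$, i.e.\ $\mdl x\leq -a$, so $x\in\mdl^{-1}[\da(-a)]$ and $p(\{x\},a)=x$; thus $x\leq p(\{x\},a)$ exhibits a violating $X$.

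The crux of the argument, and the step I expect to demand the most care, is the forward direction, where passing from $x\leq\ior X_0$ to $\mdl x\leq\ior_{y\in X_0}\mdl y$ uses $\kappa$-additivity on the subfamily $X_0$, legitimate exactly because $|X_0|<\kappa$; the remaining manipulations are routine bookkeeping with atoms. Once the modal intertwining is established, combining it with the fact that $\tau_A$ is a bijective homomorphism of complete Boolean algebras shows that $\tau_A$ is an isomorphism of complete modal algebras, as required.
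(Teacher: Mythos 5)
Your proof is correct and takes essentially the same route as the paper's: the Boolean part is dismissed as routine, and the modal intertwining is reduced via Lemma~\ref{equivalence} and atomicity to the equivalence $a\leq\mdl x\eq\forall X\,(|X|<\kappa\thn x\not\leq p(X,a))$, proved by the same $\kappa$-additivity computation $\mdl p(X,a)\leq -a$ in one direction and the same singleton witness $X=\{x\}$ in the other. The only differences are presentational: you phrase both directions contrapositively and first consolidate the identity $\drel{R(X)}\tau_{A}(x)=\{a\in\atom(A)\mid x\not\leq p(X,a)\}$, where the paper argues the two inclusions directly (by contradiction, and by unwinding the definition of $R(\{x\})$).
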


\begin{proof}
It is clear that $\tau_{A}$ is an isomorphism of complete Boolean algebras. 
We show that 
$$
\tau_{A}(\mdl x)=\mdl_{F(A)}\tau_{A}(x)
$$
for any $x\in A$. What we have to show is 
$$
\{a\in\atom(A)\mid a\leq\mdl x\}
=
\bigcap_{X\subseteq A,\ |X|<\kappa}\drel{R(X)}\{a\in\atom (A)\mid a\leq x\}. 
$$

\noindent
($\subseteq$):
Suppose $a\leq\mdl x$. Take any $X\subseteq A$ such that $|X|<\kappa$. 
If $x\leq p(X,a)$, then 
\begin{align*}
a
&\leq
\mdl x\\
&\leq 
\mdl p(X,a)\\
&=
\mdl \ior\mdl^{-1}\left[\da(-a)\right]\cap X\\
&=
\ior\mdl\left[\mdl^{-1}\left[\da(-a)\right]\cap X\right]
&& \text{($\kappa$-additivity)}\\
&\leq
\ior \da(-a)\\
&=
-a,  
\end{align*}
which contradicts to $a\in\atom(A)$. Hence, $x\not\leq p(X,a)$. 
As $A$ is atomic, there exists $b\in\atom(A)$ such that 
$b\leq x$ and $b\not\leq p(X,a)$. 
Then $a\rel{R(X)}b$ by Lemma \ref{equivalence}, and
$$
a\in\drel{R(X)}\{b\in\atom(A)\mid b\leq x\}. 
$$
As $X$ is taken arbitrarily, 
$$
a\in\bigcap_{X\subseteq A,\ |X|<\kappa}\drel{R(X)}\{b\in\atom(A)\mid b\leq x\}. 
$$

\noindent
($\supseteq$):
Suppose
$a\not\leq\mdl x$. Then for any $b\in\atom(A)$ such that $b\leq x$,  
$$
a\not\leq \mdl x=\iand\mdl \left[\ua b\cap \{x\}\right]. 
$$
Hence, 
$$
a\not\in\drel{R\left(\{x\}\right)}\{b\in\atom(A)\mid b\leq x\}. 
$$
Thus, 
$$
a\not\in\bigcap_{X\subseteq A,\ |X|<\kappa}\drel{R(X)}\{b\in\atom(A)\mid b\leq x\}. 
$$
\end{proof}

\begin{proposition}
Let $\kappa$ be a regular cardinal. 
For any object $M=\langle W,S\rangle$ of $\mkf_{\kappa}$, define 
$\theta_{M}:M\yy F(G(M))$ by 
$$
\theta_{M}(w)=\{w\}
$$
for any $w\in W$. 
Then $\theta$ is a natural transformation from 
$\ident_{\mkf_{\kappa}}$ to $F\circ G$. 
\end{proposition}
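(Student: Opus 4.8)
The plan is to follow the template already used for the transformation $\tau$: the substance of this proposition is the commutativity of the naturality squares, while the verification that each component $\theta_M$ is genuinely an arrow of $\mkf_\kappa$ (in fact an isomorphism) is left to a subsequent theorem, exactly as Theorem~\ref{camatomkf} complements the corresponding proposition for $\tau$. Accordingly, I would fix an arrow $f:M_1\yy M_2$ of $\mkf_\kappa$, writing $M_1=\langle W_1,S_1\rangle$ and $M_2=\langle W_2,S_2\rangle$, and establish the equality of maps
$$
F(G(f))\circ\theta_{M_1}=\theta_{M_2}\circ f.
$$

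First I would unwind both functors at the level of underlying sets. Since the atoms of the Boolean reduct $\power(W)$ of $G(M)$ are exactly the singletons $\{w\}$, the assignment $\theta_M:w\mapsto\{w\}$ is a bijection of $W$ onto $\atom(G(M))$; and by definition $F(G(f))$ is the restriction to atoms of the left adjoint $\ladj{(G(f))}$, where $G(f)=f^{-1}[-]:\power(W_2)\yy\power(W_1)$. The one computation that carries the argument is the evaluation of this left adjoint on an atom. Using $\ladj{g}(b)=\iand g^{-1}[\ua b]$ with $g=G(f)$ and $b=\{w\}$, one has
$$
(G(f))^{-1}\left[\ua\{w\}\right]=\{X\subseteq W_2\mid w\in f^{-1}[X]\}=\{X\subseteq W_2\mid f(w)\in X\},
$$
whose meet in $\power(W_2)$ is $\bigcap\{X\mid f(w)\in X\}=\{f(w)\}$. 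Hence $F(G(f))(\{w\})=\{f(w)\}$; equivalently, $\ladj{(G(f))}$ is the direct-image map $Y\mapsto f[Y]$. Both composites therefore send $w$ to $\{f(w)\}$, since $F(G(f))(\theta_{M_1}(w))=F(G(f))(\{w\})=\{f(w)\}=\theta_{M_2}(f(w))$, which is the required identity.

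The only point needing any care is this adjoint computation, namely reading off from (\ref{adjoint}) that the left adjoint of an inverse-image map is the direct-image map and then evaluating it on singletons; everything else is bookkeeping with the definitions of $F$ and $G$, so I expect no real obstacle here. The genuinely substantive claim — that $\theta_M$ is itself an isomorphism of multi-relational Kripke frames — is not part of this proposition and would be treated separately, in the manner of Theorem~\ref{camatomkf}.
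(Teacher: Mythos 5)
Your proposal is correct and takes essentially the same route as the paper: the paper likewise reduces the proposition to the naturality square and computes $\ladj{(G(g))}(\{w\})=\bigcap\{X\subseteq W_{2}\mid w\in g^{-1}[X]\}=\{g(w)\}$, deferring the verification that each $\theta_{M}$ is an isomorphism of multi-relational Kripke frames to Theorem~\ref{mkftocama}. Your additional observation that the left adjoint of the inverse-image map is the direct-image map $Y\mapsto f[Y]$ is just a reformulation of that same singleton computation.
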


\begin{proof}
For any $M$,  
$\theta_{M}$ is well-defined as a mapping,  
since 
$$\atom(G(M))=\{\{w\}\mid w\in W\}.
$$
Let $M_{1}=\langle W_{1},S_{1}\rangle$ and 
$M_{2}=\langle W_{2},S_{2}\rangle$ be objects of $\mkf_{\kappa}$, and  
$g:M_{1}\yy M_{2}$ an arrow of $\mkf_{\kappa}$. 
Then for any $w\in W_{1}$, 
\begin{align*}
F(G(g))\circ\theta_{M_{1}}(w)
&=
G(g)_{\ast}(\{w\})\\
&=
\bigcap \{X\subseteq W_{2}\mid w\in G(g)(X)\}\\
&=
\bigcap \{X\subseteq W_{2}\mid w\in g^{-1}\left[X\right]\}\\
&=
\bigcap \{X\subseteq W_{2}\mid g(w)\in X\}\\
&=
\{g(w)\}\\
&=
\theta_{M_{2}}\circ g(w).
\end{align*}
Hence, 
$$
F(G(g))\circ\theta_{M_{1}}=\theta_{M_{2}}\circ g. 
$$
\end{proof}

\begin{theorem}\label{mkftocama}
Let $\kappa$ be a regular cardinal. 
For any object $M=\langle W, S\rangle$ of $\mkf_{\kappa}$, 
$\theta_{M}:M\yy F(G(M))$ is an isomorphism of multi-relational 
Kripke frames. 
\end{theorem}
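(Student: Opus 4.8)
The plan is to show that $\theta_{M}$, which is evidently a bijection from $W$ onto $\atom(G(M))=\{\{w\}\mid w\in W\}$, is a homomorphism of multi-relational Kripke frames; by the closing remark of Definition~\ref{def:mkf} a bijective homomorphism is automatically an isomorphism, so this suffices. The first step I would take is to record a usable description of the relations of $F(G(M))$ on the atoms. Applying Lemma~\ref{equivalence} to the algebra $A=G(M)$ (whose atoms are the singletons and whose modal operator is $\mdl_{M}$), for any $\mathcal{X}\subseteq\power(W)$ with $|\mathcal{X}|<\kappa$ and any $w,u\in W$ one computes $p(\mathcal{X},\{w\})=\bigcup\{Y\in\mathcal{X}\mid w\notin\mdl_{M}Y\}$, so that
$$
\{w\}\rel{R(\mathcal{X})}\{u\}\eq u\notin\bigcup\{Y\in\mathcal{X}\mid w\notin\mdl_{M}Y\}.
$$
In words, $\{w\}\rel{R(\mathcal{X})}\{u\}$ holds exactly when $w\in\mdl_{M}Y$ for every $Y\in\mathcal{X}$ containing $u$.

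For the first homomorphism condition I am given $w\in W$ and a relation $R(\mathcal{X})$ of $F(G(M))$, and must produce $R\in S$ with $w\rel{R}y\thn\{w\}\rel{R(\mathcal{X})}\{y\}$ for all $y$. Let $\mathcal{X}^{-}=\{Y\in\mathcal{X}\mid w\notin\mdl_{M}Y\}$. For each $Y\in\mathcal{X}^{-}$, since $\mdl_{M}Y=\bigcap_{Q\in S}\drel{Q}Y$ and $w\notin\mdl_{M}Y$, I can choose $Q_{Y}\in S$ with $w\notin\drel{Q_{Y}}Y$, that is $\urel{Q_{Y}}w\cap Y=\emptyset$. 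Because $|\mathcal{X}^{-}|\leq|\mathcal{X}|<\kappa$ and $M$ is $\kappa$-downward directed, there is $R\in S$ with $R\subseteq\bigcap_{Y\in\mathcal{X}^{-}}Q_{Y}$ (and any $R\in S$ serves in the degenerate case $\mathcal{X}^{-}=\emptyset$, which is available since $S\neq\emptyset$). Then $\urel{R}w\subseteq\urel{Q_{Y}}w$ for each $Y\in\mathcal{X}^{-}$, whence $\urel{R}w\cap Y=\emptyset$; so any $R$-successor $y$ of $w$ lies outside every $Y\in\mathcal{X}^{-}$, i.e. $y\notin\bigcup\mathcal{X}^{-}=p(\mathcal{X},\{w\})$, and the displayed equivalence gives $\{w\}\rel{R(\mathcal{X})}\{y\}$. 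This step, where the relations $Q_{Y}$ must be amalgamated into a single $R\in S$, is the crux of the argument and is exactly where $\kappa$-downward directedness of $M$ enters.

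The second condition is easier and I would dispatch it by an explicit choice of index set. Given $w\in W$ and $R\in S$, put $\mathcal{X}=\{W\setminus\urel{R}w\}$, a one-element, hence $<\kappa$, subset of $\power(W)$. Since no $R$-successor of $w$ lies in $W\setminus\urel{R}w$, we have $w\notin\mdl_{M}(W\setminus\urel{R}w)$, so $p(\mathcal{X},\{w\})=W\setminus\urel{R}w$ and the equivalence above collapses to $\{w\}\rel{R(\mathcal{X})}\{u\}\eq w\rel{R}u$. In particular the required implication $\{w\}\rel{R(\mathcal{X})}\{u\}\thn w\rel{R}u$ holds, with $\theta_{M}(u)=\{u\}$ as the witness. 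Having verified both conditions, $\theta_{M}$ is a bijective homomorphism and therefore an isomorphism of multi-relational Kripke frames, which completes the proof.
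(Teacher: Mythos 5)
Your proof is correct and takes essentially the same route as the paper's: identical witnesses in both homomorphism conditions (amalgamating the chosen relations $Q_{Y}$ via $\kappa$-downward directedness for the first, and the singleton $\mathcal{X}=\{W\setminus\urel{R}w\}$ for the second), with the bijectivity remark of Definition~\ref{def:mkf} closing the argument. The only difference is presentational: you obtain the characterization of $R(\mathcal{X})$ on atoms by applying Lemma~\ref{equivalence} and computing $p(\mathcal{X},\{w\})$, whereas the paper unfolds the definition of $\mdl_{M}$ in $G(M)$ directly, yielding the same equivalence.
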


\begin{proof}
It is clear that $\theta_{M}$ is a set-theoretical bijection. 
We show that it is a homomorphism of multi-relational Kripke frames. 
By definition of $G$ and $F$, 
$$
F(G(M))
=
\langle
\left\{\{w\}\mid w\in W\right\},
\left\{R(U)\mid U\subseteq \power(W),\ |U|<\kappa\right\}
\rangle, 
$$
where 
\begin{align*}
\{w_{1}\}\rel{R(U)}\{w_{2}\}
&\eq
\{w_{1}\}\subseteq\bigcap\mdl_{M}\left[\ua\{w_{2}\}\cap U\right]. 
\end{align*}
By definition of $\mdl_{M}$ in $G(M)$, 
\begin{align*}
\{w_{1}\}\rel{R(U)}\{w_{2}\}
&\eq
\{w_{1}\}\subseteq
\bigcap\left\{\bigcap_{R\in S}\drel{R} X
\mid
X\in\ua\{w_{2}\}\cap U\right\}\\
&\eq
\forall X\in U
\left(w_{2}\in X\thn
w_{1}\in\bigcap_{R\in S}\drel{R} X
\right) \\
&\eq
\forall X\in U\left(
w_{1}\not\in\bigcap_{R\in S}\drel{R} X
\thn
w_{2}\not\in X\right). 
\end{align*}

\noindent
Condition 1 of Definition \ref{def:mkf}: 
Take any $w\in W$ and any $U\in \power(W)$ such that $|U|<\kappa$. 
For any $X\in U$, if $w\not\in\bigcap_{R\in S}\drel{R} X$,  
then we can fix one $R_{X}\in S$ such that $w\not\in\drel{R_{X}}X$. 
Since $M$ is $\kappa$-downward directed, there exists $Q\in S$ such that 
$$
Q\subseteq
\bigcap\left\{R_{X}\mid X\in U,\ w\not\in\bigcap_{R\in S}\drel{R} X\right\}. 
$$
We claim that for any $w'\in W$, 
$$
w\rel{Q}w'\thn \{w\}\rel{R(U)}\{w'\}.
$$
Suppose $w\rel{Q}w'$. 
Take any $X\in U$ and 
suppose $w\not\in\bigcap_{R\in S}\drel{R} X$.  
Then $w\not\in\drel{R_{X}}X$. 
As $w\rel{R_{X}}w'$ by definition of $Q$, 
$w'\not\in X$.

\noindent
Condition 2 of Definition \ref{def:mkf}: 
Take any $w\in W$ and any $R\in S$. 
Let 
$$
U=\left\{W\setminus\urel{R}w\right\}. 
$$
Clearly, 
$$
w\not\in\drel{R}\left(W\setminus\urel{R}w\right). 
$$
Therefore, 
$$
w\not\in\bigcap_{Q\in S}\drel{Q}\left(W\setminus\urel{R}w\right). 
$$
Hence, for any $v\in W$, 
\begin{align*}
\{w\}\rel{R(U)}\{v\} 
&\eq
v\not\in W\setminus\urel{R}w\\
&\eq
w\rel{R}v. 
\end{align*}
\end{proof}

\begin{theorem}\label{mkfcama}
For any regular cardinal $\kappa$, 
$\cama_{\kappa}$ and $\mkf_{\kappa}$ are dually equivalent. 
\end{theorem}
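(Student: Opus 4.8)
The plan is simply to assemble the ingredients that the two preceding sections have already supplied. By the construction and the propositions of Section~8 (\emph{Functor from $\cama_{\kappa}$ to $\mkf_{\kappa}$}), the assignment $F:\cama_{\kappa}\yy\mkf_{\kappa}$ is a well-defined contravariant functor for every regular $\kappa$; by Section~9 (\emph{Functor from $\mkf_{\kappa}$ to $\cama_{\kappa}$}), the assignment $G:\mkf_{\kappa}\yy\cama_{\kappa}$ is a well-defined contravariant functor. It therefore remains only to check that these two functors witness a dual equivalence, i.e.\ that $\ident_{\cama_{\kappa}}\cong G\circ F$ and $\ident_{\mkf_{\kappa}}\cong F\circ G$.

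The first step is to recall that $\tau$ was shown to be a natural transformation from $\ident_{\cama_{\kappa}}$ to $G\circ F$, and that by Theorem~\ref{camatomkf} each component $\tau_{A}:A\yy G(F(A))$ is an isomorphism of complete modal algebras. Dually, $\theta$ was shown to be a natural transformation from $\ident_{\mkf_{\kappa}}$ to $F\circ G$, and by Theorem~\ref{mkftocama} each component $\theta_{M}:M\yy F(G(M))$ is an isomorphism of multi-relational Kripke frames. The second step is the standard observation that a natural transformation all of whose components are isomorphisms is automatically a natural isomorphism: defining $(\tau^{-1})_{A}=(\tau_{A})^{-1}$ (and similarly for $\theta$) componentwise and chasing the naturality square for $f:A\yy B$ backwards shows that $\tau^{-1}$ is again natural. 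Hence $\tau$ and $\theta$ are natural isomorphisms, giving $\ident_{\cama_{\kappa}}\cong G\circ F$ and $\ident_{\mkf_{\kappa}}\cong F\circ G$, which is exactly the assertion that $\cama_{\kappa}$ and $\mkf_{\kappa}$ are dually equivalent.

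As for difficulty: there is no essential obstacle at this final step, since all the substantive labour has already been carried out earlier. The genuinely delicate points were the preservation of the modal operator by $\tau_{A}$ in Theorem~\ref{camatomkf} (where $\kappa$-additivity and the definition of $p(X,a)$ are used to match $\mdl$ against the intersection over all relations $R(X)$) and the verification of the second homomorphism condition for $\theta_{M}$ in Theorem~\ref{mkftocama} (where the witnessing relation $R(\{W\setminus\urel{R}w\})$ must be produced and $\kappa$-downward directedness invoked). Given those, the only care needed here is the routine verification that componentwise-invertible natural transformations are natural isomorphisms, so the proof is a direct combination of Theorem~\ref{camatomkf} and Theorem~\ref{mkftocama} together with the functoriality established in Sections~8 and~9.
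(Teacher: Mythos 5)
Your proposal is correct and takes essentially the same approach as the paper, whose proof of this theorem consists simply of citing Theorem~\ref{camatomkf} and Theorem~\ref{mkftocama}. Your only addition---the standard verification that a natural transformation with componentwise-invertible components is a natural isomorphism---merely makes explicit what the paper leaves implicit.
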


\begin{proof}
Theorem \ref{camatomkf} and Theorem \ref{mkftocama}. 
\end{proof}

\begin{corollary}\label{mkfmap}
Let $M_{1}=\langle W_{1},S_{1}\rangle$ and $M_{2}=\langle W_{2},S_{2}\rangle$ be 
multi-relational Kripke frames. 
A mapping $f:W_{1}\yy W_{2}$ is a homomorphism of multi-relational Kripke frames 
from $M_{1}$ to $M_{2}$
if and only if the mapping $g:\power(M_{2})\yy \power(M_{1})$ which is defined by 
$$
g:S\mapsto f^{-1}[S]
$$
for any $S\subseteq W_{2}$ 
is a homomorphism of complete modal 
algebras from $G(M_{2})$ to $G(M_{1})$.
\end{corollary}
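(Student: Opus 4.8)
The map $g=f^{-1}[\cdot]$ is a homomorphism of complete Boolean algebras no matter what $f$ is, since preimage commutes with arbitrary unions, intersections and complements; hence the only content of ``$g$ is a homomorphism of complete modal algebras'' is the single modal identity $f^{-1}\left[\mdl_{M_{2}}U\right]=\mdl_{M_{1}}f^{-1}\left[U\right]$ for all $U\subseteq W_{2}$. The forward implication is then exactly what the proof of Proposition~\ref{garrow} establishes: that argument deduces this identity from the two conditions of Definition~\ref{def:mkf} alone and never uses downward directedness, so it applies to arbitrary multi-relational Kripke frames. The real work is the converse, which I would reduce to a pointwise reformulation of the modal identity.

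First I would rewrite the modal identity one point at a time. Using $w\in\drel{R}X\eq\urel{R}w\cap X\not=\emptyset$ together with $\mdl_{M}X=\bigcap_{R}\drel{R}X$, evaluating $f^{-1}\left[\mdl_{M_{2}}U\right]=\mdl_{M_{1}}f^{-1}\left[U\right]$ at a point $x\in W_{1}$ yields the equivalence
$$
\left(\forall Q\in S_{2}:\ \urel{Q}f(x)\cap U\not=\emptyset\right)
\eq
\left(\forall R\in S_{1}:\ f\left[\urel{R}x\right]\cap U\not=\emptyset\right),
$$
which I call $(\ast_{x})$ and which holds for every $x\in W_{1}$ and every $U\subseteq W_{2}$.

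From $(\ast_{x})$ both conditions of Definition~\ref{def:mkf} fall out by testing against the complement of the appropriate image set. For the second condition, fix $x$ and $R_{0}\in S_{1}$ and set $U=W_{2}\setminus f\left[\urel{R_{0}}x\right]$; the right-hand side of $(\ast_{x})$ fails at $R_{0}$, so its left-hand side fails, giving some $Q\in S_{2}$ with $\urel{Q}f(x)\subseteq f\left[\urel{R_{0}}x\right]$, which is the required back condition. For the first condition, fix $x$ and $Q_{0}\in S_{2}$ and set $U=W_{2}\setminus\urel{Q_{0}}f(x)$; were there no $R\in S_{1}$ with $f\left[\urel{R}x\right]\subseteq\urel{Q_{0}}f(x)$, the right-hand side of $(\ast_{x})$ would hold, forcing the left-hand side and in particular $\urel{Q_{0}}f(x)\cap U\not=\emptyset$, which is absurd; hence such an $R$ exists and $\urel{R}x\subseteq f^{-1}\left[\urel{Q_{0}}f(x)\right]$.

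The only step that needs genuine care is the passage from the global identity to the pointwise form $(\ast_{x})$, where one must correctly interchange $f^{-1}$ with the intersection over all relations and translate between $\drel{R}$ and $\urel{R}$; this is also where the non-additivity of $\mdl_{M}$ for a general (non-directed) frame is harmless, since $(\ast_{x})$ is derived directly and never appeals to distribution of $\mdl_{M}$ over joins. Once $(\ast_{x})$ is available, the two homomorphism conditions follow by the single complementation trick above, so I expect no further obstacle.
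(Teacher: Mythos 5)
Your proof is correct, but it takes a genuinely different route from the paper's. The forward direction coincides: like you, the paper delegates it to Proposition~\ref{garrow}, and your observation that that argument never uses downward directedness is accurate. For the converse, however, the paper stays inside the categorical machinery: from the algebra homomorphism $g$ it forms $F(g)\colon FG(M_{1})\yy FG(M_{2})$, conjugates by the natural isomorphisms of Theorem~\ref{mkftocama} to obtain the frame homomorphism $h=\theta_{M_{2}}^{-1}\circ F(g)\circ\theta_{M_{1}}$, and then uses the triangle identity (the composite of $G\theta$ and $\tau_{G}$ is the identity on $G$) to compute $h^{-1}[S]=g(S)=f^{-1}[S]$ for every $S\subseteq W_{2}$, whence $f=h$ is a homomorphism of multi-relational Kripke frames. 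You instead reduce the single modal identity to the pointwise equivalence $(\ast_{x})$ and extract both clauses of Definition~\ref{def:mkf} by testing the complements $U=W_{2}\setminus f\left[\urel{R_{0}}x\right]$ and $U=W_{2}\setminus\urel{Q_{0}}f(x)$; I checked the translation between $\drel{R}$ and $\urel{R}$ and both complementation steps, and they are sound. What each approach buys: the paper's proof is a two-line corollary once the duality is in place, but as written it leans on Theorem~\ref{mkftocama}, which is proved only for objects of $\mkf_{\kappa}$ with $\kappa$ regular, whereas the corollary is stated for arbitrary multi-relational Kripke frames; closing that gap requires either the remark following Theorem~\ref{mkfnfr} or an appeal to a trivially small $\kappa$ for which directedness is vacuous. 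Your elementary argument needs no directedness, no cardinal bookkeeping, and no naturality, so it establishes the statement at exactly its stated generality -- arguably a cleaner fit for this corollary than the paper's own derivation.
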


\begin{proof}
We only show the if-part. 
Suppose that $g$ is a homomorphism of complete modal 
algebras.
Then
$F(g):FG(M_{2})\yy FG(M_{2})$ is a homomorphism of multi-relational Kripke frames. 
Let 
$$
h=\theta_{M_{2}}^{-1}\circ F(g)\circ \theta_{M_{1}}. 
$$
By definition of $\theta$ and $\tau$, 
the composite of $G\theta$ and $\tau_{G}$ is the identity natural transformation on $G$. 
Hence, for any $S\subseteq\power(W_{2})$, 
\begin{align*}
h^{-1}[S]
&=
G(h)(S)\\
&=
G(\theta_{M_{1}})\circ GF(g)\circ G(\theta_{M_{2}}^{-1})\\
&=
\tau_{G(M_{1})}^{-1}\circ GF(g)\circ \tau_{G(M_{2})}\\
&=
g(S)\\
&=
f^{-1}[S]. 
\end{align*}
Thus, $f=h$ is a homomorphism of multi-relational Kripke frames.

$$
\xymatrix{
M_{1} \ar[d]^{\theta_{M_{1}}} \ar[r]^{h} & M_{2} \ar[d]^{\theta_{M_{2}}}\\
FG(M_{1})\ar[r]^{F(g)} & FG(M_{2})
}
\hspace{20pt}
\xymatrix{
G(M_{1})  & G(M_{2}) \ar[l]^{G(h)} \\
GFG(M_{1})\ar[u]_{G\theta_{M_{1}}}& GFG(M_{2})\ar[l]^{GF(g)} \ar[u]_{G\theta_{M_{2}}}\\
G(M_{1}) \ar[u]_{\tau_{G(M_{1})}} & G(M_{2})\ar[l]^{g} \ar[u]_{\tau_{G(M_{2})}}
}
$$
\end{proof}

\section{Application}

As an application of the duality theorem, 
we show that 
for any regular cardinals 
$\kappa$ and $\kappa'$ with $\kappa<\kappa'$, 
the inclusion functor from 
$\cama_{\kappa'}$ to $\cama_{\kappa}$ 
and 
that
from $\mkf_{\kappa'}$ to $\mkf_{\kappa}$ 
are not essentially surjective,   
where
a functor $F$ from a category $C$ to a category $D$ is said to be 
{\em essentially surjective}, if for any object $d$ of $D$, 
there exists an object $c$ of $C$ such that $F(c)$ is isomorphic to $d$.

The following proposition is based on Fact 4.5 of \cite{mnr16}.

\begin{proposition}\label{minari}
Let $\kappa$ and $\kappa'$ be regular cardinals.  
If $\kappa<\kappa'$, 
there exists a complete atomic modal algebra $A$ which is 
$\kappa$-additive but not $\kappa'$-additive. 
\end{proposition}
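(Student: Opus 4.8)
The plan is to construct $A$ explicitly as a complete atomic modal algebra whose modal operator distributes over joins of fewer than $\kappa$ sets but fails to do so for one particular join of size $\kappa$. The natural ambient object is a power-set algebra $\power(W)$, since Proposition~\ref{garrow} and the preceding results tell us that every $G(M)$ is a complete atomic modal algebra, and its $\kappa$-additivity is governed entirely by the $\kappa$-downward directedness of the multi-relational frame $M=\langle W,S\rangle$. So I would work on the frame side: I want a frame that \emph{is} $\kappa$-downward directed (so that $G(M)$ is $\kappa$-additive by the earlier proposition) but \emph{not} $\kappa'$-downward directed (which should translate into failure of $\kappa'$-additivity). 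Since $\kappa<\kappa'$ and both are regular, I have room to pick a family of relations indexed so that every subfamily of size strictly less than $\kappa$ has a lower bound in $S$, while some subfamily of size $\kappa$ does not.

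Concretely, I would take $W$ to be a set large enough to separate points under the relations, and let $S$ consist of relations $\{R_i\}$ together with all finite-or-small intersections, arranged so that $\bigcap S'$ lies in $S$ exactly when $|S'|<\kappa$. A clean way is to index relations by the elements below $\kappa$ together with a cofinal family: for each $\alpha<\kappa$ put a relation that "kills" a designated point $x_\alpha$, and ensure that intersecting fewer than $\kappa$ of them still leaves a relation in $S$ (possible because $\kappa$ is regular, so a subfamily of size $<\kappa$ omits cofinally many indices), whereas the intersection of all $\kappa$ of them is \emph{not} available in $S$. Then for the set $X=\{X_\alpha\mid\alpha<\kappa\}\subseteq\power(W)$ I arrange that $\bigcup_\alpha\mdl_M X_\alpha\subsetneq\mdl_M\bigcup_\alpha X_\alpha$ at the distinguished point, witnessing failure of $(\kappa')$-additivity since $|X|=\kappa<\kappa'$.

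The key steps, in order, are: (i) define $W$ and $S$ and verify $S$ is a non-empty set of binary relations closed under intersections of size $<\kappa$ but missing the full intersection, so $M\in\mkf_\kappa\setminus\mkf_{\kappa'}$; (ii) invoke the proposition that $G$ sends $\mkf_\kappa$ into $\cama_\kappa$, giving that $A:=G(M)$ is a $\kappa$-additive complete atomic modal algebra; (iii) exhibit an explicit family $X$ of size $\kappa$ and a point $w$ with $w\in\mdl_M\bigcup X$ but $w\notin\bigcup_{Y\in X}\mdl_M Y$, using the $\kappa$-many relations to place, for each $Y\in X$, some $R$-successor of $w$ outside $Y$, while the failure of $\kappa$-directedness prevents a single relation from witnessing this simultaneously; (iv) conclude $A$ is not $\kappa'$-additive.

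\textbf{The main obstacle} I expect is step (iii): I must choose the family $X$ and the relations so that the arithmetic of intersections works out cleanly—each small subintersection must genuinely remain in $S$ (this is where regularity of $\kappa$ is essential, guaranteeing that a union of $<\kappa$ small sets stays of size $<\kappa$), while the single critical join of size $\kappa$ forces $\bigcap_{R\in S}\drel{R}(\bigcup X)$ to pick up the point $w$ that no individual $\mdl_M X_\alpha$ contains. Getting the relations to separate exactly the right successor points, without accidentally collapsing the counterexample or destroying $\kappa$-directedness, is the delicate combinatorial core; everything else is routine once $M$ is correctly specified.
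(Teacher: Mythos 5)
Your proposal is correct and takes essentially the same approach as the paper: the paper's construction is $M=\langle\kappa\cup\{\infty\},\{Q_{X}\mid X\subseteq\kappa,\ |X|<\kappa\}\rangle$ with $Q_{X}=\{(\infty,\alpha)\mid\alpha\not\in X\}$ --- precisely your family of point-killing relations at a distinguished point, closed under intersections of size $<\kappa$ because regularity keeps $\bigcup_{i\in I}X_{i}$ small --- after which it verifies directly that $\infty\in\mdl_{M}\ior_{i\in\kappa}\{i\}$ but $\infty\not\in\ior_{i\in\kappa}\mdl_{M}\{i\}$, which is exactly your step (iii) with the family of singletons. The delicate point you flagged resolves just as you anticipated, and the $\kappa$-additivity of $G(M)$ is obtained, as you propose, from the earlier proposition that $G$ carries $\kappa$-downward directed frames to $\kappa$-additive algebras.
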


\begin{proof}
Consider a multi-relational Kripke frame $M$ defined by 
$$
M=
\left\langle
\kappa\cup\{\infty\},
\{Q_{X}\mid X\subseteq\kappa,\ |X|<\kappa\}
\right\rangle
$$
where 
$$
Q_{X}=\{(\infty,\alpha)\mid \alpha\not\in X\}. 
$$
Suppose $|I|<\kappa$, and for any $i\in I$, suppose 
$X_{i}\subseteq \kappa$ and $|X_{i}|<\kappa$. 
Then $|\bigcup_{i\in I}X_{i}|<\kappa$ and 
$$
Q_{\bigcup_{i\in I}X_{i}}
=
\bigcap_{i\in I}Q_{X_{i}}. 
$$
Hence, $M$ is an object of $\mkf_{\kappa}$. Therefore, by the duality theorem, 
$G(M)$ is an object of $\cama_{\kappa}$.  
We show that in $G(M)$,   
$$
\mdl_{M}\ior_{i\in\kappa}\{i\}
\not\leq
\ior_{i\in\kappa}\mdl_{M}\{i\}. 
$$
For any
$X\subseteq\kappa$ such that $|X|<\kappa$, 
there exists $i\in \kappa$ such that 
$i\not\in X$. Hence, 
$$
\infty\in\bigcap_{X\subseteq\kappa,\ |X|<\kappa}\drel{Q_{X}}
\bigcup_{i\in\kappa}
\{i\}. 
$$
Thus, 
$$
\infty\in
\mdl_{M}\ior_{i\in\kappa}\{i\}.
$$

\noindent
On the other side, 
for any $i\in\kappa$, 
$$
\infty\not\in
\drel{Q_{\{i\}}}\{i\}. 
$$
Therefore,
$$
\infty \not\in
\bigcap_{X\subseteq\kappa,\ |X|<\kappa}\drel{Q_{X}}
\{i\}. 
$$
Since $i$ is taken arbitrarily
$$
\infty \not\in
\bigcup_{i\in I}
\bigcap_{X\subseteq\kappa,\ |X|<\kappa}\drel{Q_{X}}
\{i\}.
$$
Hence, 
$$
\infty\not\in
\ior_{i\in\kappa}\mdl_{M}\{i\}. 
$$
\end{proof}

\begin{theorem}
Let $\kappa$ and $\kappa'$ be regular cardinals such that $\kappa<\kappa'$.  
Then 
the inclusion functor from 
$\cama_{\kappa'}$ to $\cama_{\kappa}$ 
and that
from $\mkf_{\kappa'}$ to $\mkf_{\kappa}$ 
are not essentially surjective.  
\end{theorem}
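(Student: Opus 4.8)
The plan is to exhibit explicit separating objects on each side, with Proposition~\ref{minari} supplying the crucial witness. First I would record the trivial but necessary observation that since $\kappa<\kappa'$, every $\kappa'$-additive algebra is $\kappa$-additive and every $\kappa'$-downward directed frame is $\kappa$-downward directed; thus $\cama_{\kappa'}$ and $\mkf_{\kappa'}$ are genuinely (full) subcategories of $\cama_{\kappa}$ and $\mkf_{\kappa}$, so the two inclusion functors are well defined and the statement is meaningful.

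For the algebra side, I would take the algebra $G(M)$ produced in the proof of Proposition~\ref{minari}: it is an object of $\cama_{\kappa}$ which is $\kappa$-additive but fails $\mdl_{M}\ior_{i\in\kappa}\{i\}\leq\ior_{i\in\kappa}\mdl_{M}\{i\}$, hence is not $\kappa'$-additive. The only point to verify is that $\kappa'$-additivity is invariant under isomorphism of complete modal algebras. This is routine: an isomorphism $\phi$ is a bijection commuting with $\mdl$ and with arbitrary joins, so it transports the defining equation (\ref{eqbarcan}), applied to index sets of size $<\kappa'$, back and forth. Consequently $G(M)$ cannot be isomorphic to any object of $\cama_{\kappa'}$, and the inclusion $\cama_{\kappa'}\yy\cama_{\kappa}$ is not essentially surjective.

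For the frame side, I would reuse the same witness $M$, which lies in $\mkf_{\kappa}$ by Proposition~\ref{minari}, together with the functor $G:\mkf_{\kappa}\yy\cama_{\kappa}$ and the duality of Theorem~\ref{mkfcama}. Suppose, for contradiction, that $M$ were isomorphic in $\mkf_{\kappa}$ to some object $M'$ of $\mkf_{\kappa'}$. Applying $G$, which as a functor preserves isomorphisms, yields $G(M)\cong G(M')$ in $\cama_{\kappa}$. But $M'$ is $\kappa'$-downward directed, so by the same proposition (read with $\kappa'$ in place of $\kappa$) $G(M')$ is $\kappa'$-additive; by the invariance just noted, $G(M)$ would then be $\kappa'$-additive as well, contradicting Proposition~\ref{minari}. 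Hence $M$ is isomorphic to no object of $\mkf_{\kappa'}$, and the inclusion $\mkf_{\kappa'}\yy\mkf_{\kappa}$ is not essentially surjective. Equivalently, one could run the whole argument through the duality of Theorem~\ref{mkfcama}: essential surjectivity of either inclusion would, via $F$ and $G$, force essential surjectivity of the other, so a single counterexample settles both.

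I expect the only delicate point to be the isomorphism-invariance of $\kappa'$-additivity, since everything else follows immediately once Proposition~\ref{minari} is available. On the frame side the cleanest route really is to transport the question to algebras via $G$, rather than arguing directly that $\kappa'$-downward directedness is preserved by isomorphisms of multi-relational Kripke frames; the latter is awkward precisely because such isomorphisms need not match up the two sets of relations, whereas the functor $G$ sidesteps this entirely.
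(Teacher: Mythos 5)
Your proposal is correct and takes essentially the same approach as the paper: it uses the witness $M$ of Proposition~\ref{minari}, observes that $G(M)$ is isomorphic to no object of $\cama_{\kappa'}$ (spelling out the isomorphism-invariance of $\kappa'$-additivity, which the paper dismisses as ``clear''), and transports the conclusion to the frame side via $G$ and Theorem~\ref{mkfcama}, exactly as the paper does. One cosmetic slip: the fact that $M'\in\mkf_{\kappa'}$ implies $G(M')$ is $\kappa'$-additive comes from the well-definedness proposition for the functor $G$ (read with $\kappa'$ in place of $\kappa$), not from Proposition~\ref{minari} itself, but this does not affect the argument.
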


\begin{proof}
Let $M$  be the multi-relational Kripke frame defined in Proposition \ref{minari}. 
Then 
$G(M)$ is an object of $\cama_{\kappa}$, and it is clear that 
no objects of $\cama_{\kappa'}$ are isomorphic to $G(M)$. 
Hence, by Theorem \ref{mkfcama}, 
no objects of $\mkf_{\kappa'}$ are isomorphic to $M$. 
\end{proof}


\newcommand{\noop}[1]{}

\end{document}